\newtheorem{thm}{Theorem}[section]
\newtheorem{corollary}[thm]{Corollary}
\newtheorem{proposition}[thm]{Proposition}
\newtheorem{theorem}[thm]{Theorem}
\newtheorem{lemma}[thm]{Lemma}
\theoremstyle{definition}
\newtheorem{definition}[thm]{Definition}
\newtheorem{example}[thm]{Example}
\theoremstyle{remark}
\newtheorem{remark}[thm]{Remark}
\newtheorem{question}[thm]{Question}
\numberwithin{equation}{section}
\def\N{\mathbb{N}}
\def\R{\mathbb{R}}
\def\D{\mathbb{D}}
\def\C{\mathbb{C}}
\def\Tn2{{\mathcal T}^{(n)}_2}
\def\LL{\mathcal{L}}
\def\NN{\mathbb{N}}
\def\RR{\mathbb{R}}
\def\CC{\mathbb{C}}
\def\CCma{\mathbb{C}^{+}}
\def\LL{\mathcal{L}}
\def\La{\textrm{L}}
\def\Po{\textrm{P}}
\def\ffi{\varphi}
\def\zc{\overline{z}}
\def\gwn{g_{\omega,n}}
\def\TniiRma{\mathcal{T}^{(n)}_{2}}
\def\HniiCma{H^{(n)}_{2}}
\def\HiinCma{{H}_{2}^{(n)}}
\begin{document}
\title[Hilbertian Hardy-Sobolev spaces]{Hilbertian Hardy-Sobolev spaces on a half-plane}

\author[Gal\'{e}]{Jos\'{e} E. Gal\'{e}}
\address{Departamento de Matem\'aticas, Instituto Universitario de Matem\'aticas y Aplicaciones, Universidad de Zaragoza, 50009 Zaragoza, Spain.}
\email{gale@unizar.es}

\author[Matache]{Valentin Matache}
\address{ Department of Mathematics, University of Nebraska\\
Omaha, NE 68182, USA.} \email{vmatache@unomaha.edu}

\author[Miana]{Pedro J. Miana}
\address{Departamento de Matem\'aticas, Instituto Universitario de Matem\'aticas y Aplicaciones, Universidad de Zaragoza, 50009 Zaragoza, Spain.}
\email{pjmiana@unizar.es}

\author[S\'{a}nchez-Lajusticia]{Luis S\'{a}nchez--Lajusticia}
\address{Departamento de Matem\'aticas, Facultad de Ciencias, Universidad de Zaragoza, 50009 Zaragoza, Spain.}
\email{luiss@unizar.es}

\thanks{Jos\'{e} E. Gal\'{e} and Pedro J. Miana   have been partially supported by Project MTM2016-77710-P, DGI-FEDER, of the MCYTS and Project E26-17R, D.G. Arag\'on, Spain.}

\keywords{Higher absolutely continuous function space; analytic Hardy spaces on a half plane; reproducing kernel; Laplace transform; Composition operators}

\subjclass[2010]{Primary 46E22, 47B33; Secondary 44A10, 30H10}

\maketitle

\begin{abstract} In this paper we deal with a scale of reproducing kernel Hilbert spaces  $H^{(n)}_2$, $n\ge 0$,
which are linear subspaces of the classical Hilbertian Hardy space on the right-hand half-plane $\C^+$.
They are  obtained as ranges of the Laplace transform in extended versions of the Paley-Wiener theorem
which involve absolutely continuous functions of higher degree.
An explicit integral formula is given for the reproducing kernel $K_{z,n}$ of $H^{(n)}_2$, from which we can find
the estimate
$\Vert K_{z,n}\Vert\sim\vert z\vert^{-1/2}$ for $z\in\C^+$. Then
composition operators $C_\varphi :H_2^{(n)} \to H_2^{(n)}$, $C_\varphi f=f\circ \varphi $, on these spaces are discussed, giving some necessary and some sufficient  conditions for analytic maps $\varphi: \C^+\to \C^+$  to induce bounded composition operators.
\end{abstract}

%%%%%%%%%%%%%
%%%%%%%%%%%%%
\section{Introduction}\label{intro}
%%%%%%%%%%%
%%%%%%%%%%%%%

As is well known, $C_0$-semigroups provide solutions to abstract Cauchy equations,
$$
\left\{
\begin{array}{ll}
u'(t)=Au(t),\quad& t\ge 0,\\
u(0)=x, \quad& x\in D(A),
\end{array}
\right.
$$
whenever such equations are well-posed
\cite{ABHN}. There still are other important Cauchy equations for which
$C_0$-semigroups do not work and that must be treated using a variety of methods or theories.
Among such theories, one finds in the literature those concerning
integrated semigroups or distribution semigroups.
At the intersection of these two settings, one has
the space ${\mathcal T}_1^{(n)}$ of functions on $\R^+:=(0,\infty)$ defined as the completion of the test function space
$C_c^\infty(\R^+)$ in the norm
$$
\Vert f\Vert_{1,(n)}:=\int_0^\infty\vert f^{(n)}(t)\vert t^n\ dt<\infty, \quad f\in C_c^\infty(\R^+).
$$
We have the continuous inclusions
$ {\mathcal T}_1^{(m)}\hookrightarrow{\mathcal T}_1^{(n)}\hookrightarrow {\mathcal T}_1^{(0)}=L^1(\R^+)$
for every $m\ge n$. Moreover, every ${\mathcal T}_1^{(n)}$ is a convolution Banach algebra for
the norm $\Vert \cdot\Vert_{1,(n)}$.
Being primarily  introduced in \cite{AK} to approach Cauchy problems (with a different notation), the space
${\mathcal T}_1^{(n)}$
has not been studied as a Banach algebra in itself until quite recently; as a sample,
see \cite{Galeetal}, \cite{GaleSanchez} and references therein. The character space of the  Banach algebra
${\mathcal T}_1^{(n)}$ is the closure, in the usual (Euclidean) topology, of the right-hand half-plane $\C^+:=\{z\in\C:\Re z>0\}$ and the associated Gelfand mapping coincides with the Laplace transform $\LL$. In \cite{Galeetal}, the range of the mapping $\LL$ restricted on
${\mathcal T}_1^{(n)}$ is studied, showing that it is dense in a certain algebra of holomorphic functions with continuous derivatives up to $i\R\setminus\{0\}$.

The space ${\mathcal T}_1^{(n)}$ is  a particular case of spaces ${\mathcal T}_p^{(n)}$ defined for
$1\le p <\infty$ by $g\in{\mathcal T}_p^{(n)}$ if and only if $g$ is a measurable function on $\R^+$
which belongs to the closure of $C_c^\infty(\R^+)$ in the norm
$$
\Vert f\Vert_{(n),p}:=\left(\int_0^\infty\vert f^{(n)}(t) t^n\vert^p\ dt\right)^{1/p}<\infty, \quad f\in C_c^\infty(\R^+).
$$

As in the case of ${\mathcal T}_1^{(n)}$ and in analogy with spaces $L^p(\R^+)$, $1\le p\le2$, it makes sense to study the range of the
Laplace transform acting on
${\mathcal T}_p^{(n)}$. Naturally, the first case to be considered is $p=2$. In \cite {Roy},
the range $\LL({\mathcal T}_2^{(n)})$ is characterized as the (Hilbert) space $H_2^{(n)}$ of holomorphic functions in $H_2$ such
that $z^k F^{(k)}\in  H_2$ for every $k=0,1,\dots, n$. Here, we  denote by $H_2$ the Hardy space of all analytic functions $F$ over $\C^+$ satisfying condition
\begin{equation}\label{Norm01}
\| F\| _2:=\sup_{x>0} \left(\frac{1}{2\pi }\int_{-\infty }^{\infty }|F(x+iy)|^2\, dy\right)^{1\over 2} <\infty.
\end{equation}
In fact, $H_2$ is a Hilbert space with the norm given by (\ref{Norm01}).

Point evaluations are continuous on the Hardy space $H_2$ so that it  is a reproducing kernel space (RKHS for short)
of holomorphic functions. In fact, its reproducing kernel $K(z,w)=K_w(z)$ and corresponding norm are given by
$$
K_w(z):={1\over (z+\overline{w})}, \hbox{ and } {\Vert K_w\Vert_2= {1\over\sqrt{2 (\Re w)}}},\quad z,w\in \C^+.
$$
For more details on the space $H_2$, the reader is referred to \cite[Chapter VI]{Duren} and \cite[Chapter 8]{Hoffman}; see also  \cite{JM, PR}.

Endowed with the norm $\Vert F\Vert_{2,(n)}:=\Vert z^nF\Vert_2$,
the space $H_2^{(n)}$ is contained in
$H_2$ continuously and so it is a RKHS. One of the main purposes in the present paper is to determine
the reproducing kernel of $K_n(z,w)=K_{n,w}(z)$ for $H_2^{(n)}$ and to estimate the norm $\Vert K_{n,w}\Vert_{2,(n)}$.
In effect, for every $n\in\N$, we find such a kernel in the form
$$
K_{n,w}(z)=\frac{1}{((n-1)!)^2}\int _0^1\int _0^1\frac{(1-t)^{n-1}(1-s)^{n-1}}{zt+s\overline{w}}\, ds\, dt, \quad z, w\in \C ^+,
$$
which can be regarded as an integrated version of the kernel $K(z,w)$ described above. Then a detailed analysis of that integral
gives us  the estimate
$$
\frac{1}{(n-1)!\sqrt{{ (2n-1)}}}\ {1\over\sqrt{|w|}}\leq \|K_{n,w}\|_{2,(n)}\leq \frac{\sqrt{\pi}}{(n-1)!\sqrt{{ n}}}\ {1\over\sqrt{|w|}},
\quad w\in \C^+.
$$

A notable difference between norms $\Vert K_w\Vert_2$ and $\|K_{n,w}\|_{2,(n)}$ deserves attention.
Whereas the norm $\Vert K_w\Vert_2$ is equivalent to the inverse of the distance of point $w$ to
the boundary of $\C^+$, as is the case of some other RKHS of analytic functions on their domains, the norm
$\|K_{n,w}\|_{2,(n)}$ depends on the {\it radial} distance of $w$ to the origin in the complex plane.

The estimate of the norm $\|K_{n,w}\|_{2,(n)}$ given above suggests studying
composition operators
$C_\varphi$
on $H_2^{(n)}$,  that is, $C_\varphi\colon H_2^{(n)}\to H_2^{(n)}$ defined by
$C_\varphi(f):=f\circ \varphi$ for all $f\in H_2^{(n)}$ and suitable analytic selfmaps $\varphi$ of $\C^+$. Here $\varphi$ is said to be the symbol inducing $C_\varphi$.

The motivation for such a search comes naturally from the well known Caughran--Schwartz identity, which
applied to the kernel function $K_{n,w}$ on $\C^+$ reads
$
C^*_\varphi (K_{n,w})=K_{n,\varphi (w)}$ for $w\in \C^+,$ where $C_\varphi$ is bounded and $C^*_\varphi$ is its adjoint (\cite{PR}). This identity readily implies
$$\sup_{w\in\C^+} \{\Vert K_{n,\varphi (w)}\Vert_{2,(n)}  \Vert K_{n,w}\Vert_{2,(n)}^{-1}\}\leq \| C_\varphi \|,$$ from which we get the necessary condition
\begin{equation}\label{radialsup}
\sup_{w\in \C^+} \frac{\vert w\vert }{\vert\varphi (w)\vert}<\infty,
\end{equation}
clearly related to the angular derivative $\lim_{z\to\infty}\vert w\vert\vert\varphi (w)\vert^{-1}$.

Thus the finiteness property given by (\ref{radialsup}) is a necessary condition for the boundedness of $C_\varphi$.
Conversely, we will show that the following more restrictive
condition implies the boundedness of $C_\varphi$:
\begin{equation}\label{Derivasup}
\sup_{z\in \C^+}\left|  z^k\frac{\varphi ^{(k)}(z)}{\varphi (z)}\right|  <\infty,\quad k=1,2,3, \dots, n.
\end{equation}
We do not know whether any of conditions (\ref{radialsup}) or (\ref{Derivasup}) is both necessary and sufficient for the boundedness of  $C_\varphi$.

\medskip
The paper is organized as follows.
In Section \ref{SobolevLebesgue}, we define spaces ${\mathcal T}_2^{(n)}$ --and their corresponding inner products--
such as they are introduced in \cite{Roy} (in a slightly different but equivalent way from above). Then we provide
${\mathcal T}_2^{(n)}$
with another, equivalent, inner product --which allows us to intertwine $n$-times derivation
and multiplication by the power $t^n$--,
and give a relevant example of function in ${\mathcal T}_2^{(n)}$. Such an example will be used in Section
\ref{RKHSobolev} to find the kernel $K_{n}$ which generates ${\mathcal T}_2^{(n)}$. The Paley-Wiener type theorem saying that
$\LL({\mathcal T}_2^{(n)})=H_2^{(n)}$ isometrically was first proved in \cite{Roy}. In Section \ref{HardySobolev}, a different, shorter,
proof of this result is given on the basis of the equivalent inner product quoted above and the usage of Laguerre and Legendre polynomials.  We give the connection of spaces $H_2^{(n)}$
with subspaces of the usual Hardy space $H_2(\D)$ in
the unit disc $\D$, obtained via the Cayley transform between the disc $\D$ and the half-plane $\C^+$ in Section \ref{HardySobolev2}.
As said before, Section \ref{RKHSobolev} is devoted to establishing the formula for the kernel $K_{n}$.
Moreover, the estimate of the norm of $K_{n,w}$ in
$H_2^{(n)}$ is obtained here for every $w\in\C^+$, up to constants from below and from above independent of $w$. In order to explore composition operators on $H_2^{(n)}$, Section \ref{selfmaps} is devoted to preliminary remarks on and examples of
analytic selfmaps of $\C^+$. Finally, the boundedness of composition operators $C_\varphi$ on $H_2^{(n)}$ is discussed in Section \ref{compositionOp}.
This section ends with some examples of rational functions or quotients  of linear combinations of powers of $z$ which are symbols inducing bounded composition operators on $H_2^{(n)}$, $n \ge 1$. In the last section we present some remarks and open questions.

%%%%%%%%%%%%%%%%%%%%%%%%%%%%%%%%%%%%%%%%%%%
%%%%%%%%%%%%%%%%%%%%%%%%%%%%%%%%%%%%%%%%%%%
\section{The Sobolev-Lebesgue space ${\mathcal T}^{(n)}_2$}\label{SobolevLebesgue}
%%%%%%%%%%%%%%%%%%%%%%%%%%%%%%%%%%%%%%%%%%%%
%%%%%%%%%%%%%%%%%%%%%%%%%%%%%%%%%%%%%%%%%%%%

Let $W^{-1}$ be the integration operator on the test space $C_c^\infty[0,\infty)$ given by
$$
W^{-1}\varphi(t):=\int_t^\infty \varphi(s)ds,
\quad \varphi\in C_c^\infty[0,\infty), \ t\ge 0.
$$
As a matter of fact, $W^{-1}\varphi\in C_c^\infty[0,\infty)$ for every $\varphi\in C_c^\infty[0,\infty)$ so that we can define the
$n$--th iterate operator $W^{-n}:=W^{-1}W^{-(n-1)}$ of $W^{-1}$ for every $n\in\N$. By straightforward $n$-times application of Fubini's theorem we have
$$
W^{-n}\varphi(t)={1\over (n-1)!}\int_t^\infty (s-t)^{n-1}\varphi(s)ds, \quad \varphi\in C_c^\infty[0,\infty), \ t\ge 0.
$$

The integration operator $W^{-n}$ can be extended in order to act on a number of spaces. Here we consider a Hilbertian setting.

For $n\in\N$, let $L^2(t^n)$ be the Hilbert space of the (classes of) complex measurable functions
$\varphi$ on $(0,\infty)$ such that
$$
\Vert\varphi\Vert_{L^2(t^n)}
:=\left(\int_0^\infty\vert\varphi(t)t^n\vert^2\, dt\right)^{1/2}<\infty.
$$

\begin{proposition}\label{integralderi} Take $n\in \N$ and $\varphi\in L^2(t^n)$. Then,
for  $1\le j\le n-1$, the mapping
$$
(W^{-j}\varphi)(t)
:=\int_t^\infty\int_{t_{j-1}}^\infty\dots
\int_{t_1}^\infty\varphi(s)\, ds\, dt_1\, \dots\, dt_{j-1},\quad t>0,
$$
is well defined and $W^{-j}\varphi\in L^1(a,\infty)$ for every $a>0$.

Moreover, for $1\le k\le n$, we have
$$
(W^{-k}\varphi)(t)={1\over(k-1)!}\int_t^\infty(s-t)^{k-1}\varphi(s)\, ds, \quad t>0,
$$
$W^{-k}\varphi$ is $(k-1)$-times differentiable with $(W^{-k}\varphi)^{(l)}(t)=(-1)^lW^{-(k-l)}\varphi$, that is,
$$
(W^{-k}\varphi)^{(l)}(t)=
{(-1)^{l}\over(k-l-1)!}\int_t^\infty(s-t)^{k-l-1}\varphi(s)\, ds, \quad t>0,
$$
for every $1\le l\le k-1$, and $(W^{-k}\varphi)^{(k-1)}$ is absolutely continuous such that
$(-1)^k(W^{-k}\varphi)^{(k)}=\varphi$.
\end{proposition}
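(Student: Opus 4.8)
The plan is to reduce every assertion to the closed-form Cauchy formula for repeated integration and then read off the analytic properties from it. First I would establish the integrability estimate that drives the whole argument. Writing $s^{k-1}|\varphi(s)|=s^{k-1-n}\bigl(s^{n}|\varphi(s)|\bigr)$ and using $0\le s-t\le s$ for $s\ge t>0$, the Cauchy--Schwarz inequality on $(t,\infty)$ gives
$$
\int_t^\infty (s-t)^{k-1}|\varphi(s)|\,ds\le\int_t^\infty s^{k-1}|\varphi(s)|\,ds\le\Bigl(\int_t^\infty s^{2(k-1-n)}\,ds\Bigr)^{1/2}\Vert\varphi\Vert_{L^2(t^n)}.
$$
The exponent satisfies $2(k-1-n)\le-2<-1$ exactly when $k\le n$, so the first factor is finite for all $1\le k\le n$ and every $t>0$; the borderline case $k=n$, where $\int_t^\infty s^{-2}\,ds=1/t$, is precisely what forces the restriction $k\le n$. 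In particular $\varphi\in L^1(a,\infty)$ for every $a>0$.

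Next I would prove the closed formula $(W^{-k}\varphi)(t)=\frac{1}{(k-1)!}\int_t^\infty(s-t)^{k-1}\varphi(s)\,ds$ by induction on $k$, which simultaneously shows that the iterated integrals defining $W^{-j}\varphi$ are well defined. The case $k=1$ is the definition of $W^{-1}$. For the inductive step I would write $(W^{-k}\varphi)(t)=\int_t^\infty(W^{-(k-1)}\varphi)(u)\,du$ (legitimate since $W^{-(k-1)}\varphi\in L^1(t,\infty)$ for $k-1\le n-1$), insert the inductive formula for $W^{-(k-1)}\varphi$, and interchange the order of integration by Fubini's theorem, justified by the absolute convergence of $\int_t^\infty\!\int_u^\infty(s-u)^{k-2}|\varphi(s)|\,ds\,du$ over the triangle $\{t\le u\le s\}$ shown above. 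Evaluating the inner integral $\int_t^s(s-u)^{k-2}\,du=(s-t)^{k-1}/(k-1)$ yields the claimed formula. The $L^1(a,\infty)$ membership for $1\le j\le n-1$ then follows from a further Fubini computation, namely $\int_a^\infty|W^{-j}\varphi(t)|\,dt\le\frac{1}{j!}\int_a^\infty(s-a)^{j}|\varphi(s)|\,ds$, which by the same Cauchy--Schwarz bound is finite precisely when $j\le n-1$; this accounts for the two distinct index ranges in the statement.

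It then remains to differentiate. Since $u\mapsto(W^{-(k-1)}\varphi)(u)$ is continuous on $(0,\infty)$ (by dominated convergence applied to the closed formula when $k\ge3$, and directly from $W^{-1}\varphi(u)=\int_u^\infty\varphi$ when $k=2$) and locally integrable, the relation $(W^{-k}\varphi)(t)=\int_t^\infty(W^{-(k-1)}\varphi)(u)\,du$ together with the fundamental theorem of calculus gives $(W^{-k}\varphi)'=-W^{-(k-1)}\varphi$. Iterating this identity produces $(W^{-k}\varphi)^{(l)}=(-1)^l W^{-(k-l)}\varphi$ for $1\le l\le k-1$, which is the displayed derivative formula. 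Taking $l=k-1$ leaves $(W^{-k}\varphi)^{(k-1)}(t)=(-1)^{k-1}\int_t^\infty\varphi(s)\,ds$; since $\varphi\in L^1(a,\infty)$ for every $a>0$, this primitive is absolutely continuous on compact subintervals of $(0,\infty)$ and its almost-everywhere derivative equals $(-1)^{k-1}(-\varphi)=(-1)^k\varphi$, establishing $(-1)^k(W^{-k}\varphi)^{(k)}=\varphi$.

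The routine bookkeeping aside, the one place demanding care is the interchange of integrations: the main obstacle is to verify absolute integrability over the nested region so that Fubini applies at every stage, and this is exactly the content of the Cauchy--Schwarz estimate, whose sharpness explains both why the closed formula and differentiability hold only for $1\le k\le n$ and why the $L^1(a,\infty)$ membership degrades to $1\le j\le n-1$. Everything else---continuity, the iterated use of the fundamental theorem of calculus, and the concluding absolute-continuity argument---follows from standard real-variable reasoning once that integrability is in hand.
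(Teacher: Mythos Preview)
Your proof is correct and follows essentially the same route as the paper: Fubini--Tonelli to collapse the iterated integral to the Cauchy formula, a Cauchy--Schwarz estimate against the weight $s^{n}$ to secure absolute convergence, and then the fundamental theorem of calculus for the differentiation statements. The only noteworthy difference is in the Cauchy--Schwarz step: the paper keeps the factor $(s-t)^{j-1}$ and evaluates $\bigl(\int_t^\infty (s-t)^{2j-2}s^{-2n}\,ds\bigr)^{1/2}$ via the Beta function to obtain the sharper pointwise bound $C_j\,t^{-n+j-1/2}$ (this is reused later as the estimate \eqref{pointestim}), whereas you first replace $(s-t)^{k-1}$ by $s^{k-1}$ and then integrate. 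Your simplification is perfectly adequate for the proposition at hand; the paper's version buys the explicit decay rate in $t$ needed subsequently.
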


\begin{proof}
For $t>0$, $\varphi\in L^2(t^n)$ and $1\le j\le n-1$, applying Fubini-Tonelli's rule one has
\begin{eqnarray*}
\displaystyle\int_t^\infty\int_{t_{j-1}}^\infty&\dots&
\int_{t_1}^\infty\vert\varphi(s)\vert\, ds\, dt_1\, \dots\, dt_{j-1}
=
{1\over(j-1)!}\int_t^\infty(s-t)^{j-1}\vert\varphi(s)\vert\, ds\cr
&\le&{\Vert\varphi\Vert_{L^2(s^n)}\over(j-1)!}\,
\left(\int_t^\infty(s-t)^{2j-2}s^{-2n}\, dt\right)^{1/2}=C_k t^{-n+j-(1/2)}<\infty,
\end{eqnarray*}
where $C_k$ is a constant involving the Beta function.

The above estimate implies every statement in the text of the current proposition.
\end{proof}

Note that the above formulas also appear in studying the Volterra operator. Recall Hardy's inequality
\begin{equation}\label{hardy}
\int_0^\infty\left(W^{-m}\varphi(t)\right)^2dt\le \left({\Gamma(\frac{1}{2})\over \Gamma(m+{1\over 2})}\right)^2\int_0^\infty (t^m \varphi(t))^2dt,
\end{equation}
where $m\in \N$ and $\varphi:(0,\infty)\to (0, \infty)$ is a Borel function (\cite[ p. 245]{Hardy}).

Using this inequaliy (with $m=n$) one gets $W^{-n}\varphi\in L^2(\R^+)$ for all
$\varphi\in L^2(t^n)$, where $L^2(\R^+)=L^2(t^0)$.
Also, the linear operator $W^{-n}\colon L^2(t^n)\to L^2(\R^+)$ is injective since
$(-1)^n(W^{-n}\varphi)^{(n)}=\varphi$ for all $\varphi\in L^2(t^n)$ in accordance with the last part of the above proposition. These facts enable us to define the Hardy-Lebesgue space $\Tn2$ as follows.

\begin{definition}\label{defi} {\rm Given $n\in \N$, let $\Tn2$ denote the subspace of $L^2(\R^+)$ obtained as the  image
of $L^2(t^n)$ under $W^{-n}$ ,  i.e.,
$$
\Tn2= W^{-n}(L^2(t^n)).
$$

Note that $f^{(n)}, g^{(n)}\in L^2(t^n)$ and we endow $\Tn2$ with the inner product, transferred from $L^2(t^n)$,
$$
\langle f, g\rangle_{(n)}:=\int_0^\infty f^{(n)}(t)\overline{g^{(n)}(t)}t^{2n}dt, \quad  f, g\in \Tn2,
$$
and corresponding norm
$$
\Vert f\Vert_{2, (n)}:=\Vert f^{(n)}\Vert_{L^2(t^n)}=\Vert t^nf^{(n)}\Vert_2, \quad f\in \Tn2,
$$}
so that  the space $(\Tn2, \Vert \quad\Vert_{2, (n)})$ is a Hilbert space.
(In the case $n=0$, we write $\langle \, , \,\rangle_{(0)}=\langle \, , \,\rangle$ for the usual inner product in $L^2(\R^+)$.)
\end{definition}

\medskip
By applying again Hardy's inequality (\ref{hardy}) to
$\varphi(t)=\vert f^{(n)}(t)\vert t^k$ and $m=n-k$, for $n\ge k\ge 0$, we get
\begin{eqnarray*}
\Vert f\Vert_{2, (k)}^2&\le& \int_0^\infty\left( W^{-(n-k)}\varphi(t)\right)^2dt\cr
&\le& {\pi\over \Gamma(n-k+{1\over 2})^2}\int_0^\infty (t^{n} \vert f^{(n)}(t)\vert)^2dt
={\pi\over \Gamma(n-k+{1\over 2})^2}\Vert f\Vert_{2, (n)}^2,
\end{eqnarray*}
for $f\in {\mathcal T}^{(n)}_2$. Thus we have the continuous inclusions
\begin{equation}\label{embedding}
{\mathcal T}^{(n)}_2\hookrightarrow{\mathcal T}^{(k)}_2 \hookrightarrow L^2(\R^+),\quad n\ge k\ge 0.
\end{equation}

In particular we notice that $\Tn2$ consists of all functions
$f:(0, \infty)\to \C$ such that the derivatives $f',\cdots, f^{(n-1)}$ on $(0,\infty)$ exist, $f^{(n-1)}$
is absolutely continuous on $(0,\infty)$, and the maps $t\mapsto t^kf^{(k)}(t)$ belong to $ L^2(\R^+)$ for all $k=0,1,\dots,n$.
Moreover, taking
$\varphi=(-1)^n f^{(n)}$ in the statement and proof of Proposition \ref{integralderi} one obtains
\begin{equation}\label{pointestim}
\vert f^{(k)}(t)\vert\leq C_{n,k}t^{-k-(1/2)}\Vert f\Vert_{2,(n)}, \quad 0\le k\le n-1,\ t>0.
\end{equation}

\begin{remark}
\normalfont
The spaces $\Tn2$, $n\in\N$, have been introduced in \cite{Roy}. The presentation of the properties of such spaces done before is slightly different from the one of \cite{Roy}.
\end{remark}

\medskip
Now, our aim is to present an equivalent expression of the inner product of the space $\Tn2$.

\begin{lemma}\label{clave} Given $n \ge 0$, we define the $(n+1)$-square matrix $C_n=(c_{i,j})_{0\le i, j\le n}$ by
$$
c_{i,j}= \left\{\begin{array}{lr}
0, &  \text{for } i<j;\\
\displaystyle{\binom{i}{j}{i!\over j!},}&  \text{for } i\ge j.
\end{array}\right.
$$
\begin{itemize}
\item[(i)] Then the matrix $C_n$ is invertible and $C^{-1}_n= ((-1)^{i+j}c_{i,j})_{0\le i, j\le n}$.
\item[(ii)] For $f\in \Tn2$ and $t>0$, we have that
\begin{eqnarray*}
(t^{n}f)^{(n)}(t) &=& \sum_{k=0}^{n}c_{n,k}t^{k}f^{(k)}(t), \\
t^{n}f^{(n)}(t) &=& \sum_{k=0}^{n}(-1)^{k+n}c_{n,k}(t^{k}f)^{(k)}(t). \\
\end{eqnarray*}
\end{itemize}
\end{lemma}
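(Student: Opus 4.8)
The plan is to observe that both assertions of the lemma are encoded in a single matrix identity, so that part (ii) follows almost formally once part (i) is available. I would package the relevant quantities as column vectors $F=(t^k f^{(k)})_{0\le k\le n}$ and $G=((t^k f)^{(k)})_{0\le k\le n}$, and show that the first displayed formula of (ii) is precisely the componentwise statement $G=C_nF$, while the second is $F=C_n^{-1}G$. Thus the real content is the computation of the inverse in (i), after which the second identity is immediate.

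For part (i), I would first note that $C_n$ is lower triangular, since $c_{i,j}=0$ for $i<j$, with all diagonal entries $c_{i,i}=\binom{i}{i}\tfrac{i!}{i!}=1$; hence $C_n$ is automatically invertible, and the candidate $D_n:=((-1)^{i+j}c_{i,j})$ is likewise lower triangular with unit diagonal. It then remains to check $(C_nD_n)_{i,j}=\delta_{i,j}$. The crux is the evaluation, for $j\le k\le i$,
$$
c_{i,k}c_{k,j}=\binom{i}{k}\frac{i!}{k!}\binom{k}{j}\frac{k!}{j!}=\binom{i}{j}\binom{i-j}{k-j}\frac{i!}{j!},
$$
which uses the subset-of-a-subset identity $\binom{i}{k}\binom{k}{j}=\binom{i}{j}\binom{i-j}{k-j}$. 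Summing over $k$ and substituting $m=k-j$ reduces the $(i,j)$ entry of $C_nD_n$ to $\binom{i}{j}\frac{i!}{j!}\sum_{m=0}^{i-j}(-1)^m\binom{i-j}{m}=\binom{i}{j}\frac{i!}{j!}\,(1-1)^{i-j}$, which is $1$ for $i=j$ and $0$ for $i>j$ by the binomial theorem. This yields $C_nD_n=I$ and proves $C_n^{-1}=D_n$.

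For part (ii), the first identity is a direct application of the Leibniz rule: since $f\in\Tn2$ guarantees (via Proposition \ref{integralderi}) that $f^{(n-1)}$ is absolutely continuous and that each product $t^k f^{(k)}$ is meaningful, one has $(t^n f)^{(n)}=\sum_{k=0}^n\binom{n}{k}(t^n)^{(n-k)}f^{(k)}$, and the elementary computation $(t^n)^{(n-k)}=\tfrac{n!}{k!}t^k$ produces exactly $\sum_{k=0}^n c_{n,k}t^k f^{(k)}$. Running the same computation with $n$ replaced by each $i\le n$ establishes $G=C_nF$ in full. Applying $C_n^{-1}=D_n$ from part (i) gives $F=D_nG$, and reading off the $n$-th component delivers $t^n f^{(n)}=\sum_{k=0}^n(-1)^{k+n}c_{n,k}(t^k f)^{(k)}$, which is the second identity.

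I expect the only genuinely delicate points to be bookkeeping rather than depth: keeping the triangular index ranges straight in the matrix product, and invoking the differentiability of $f$ from Proposition \ref{integralderi} to legitimize the Leibniz expansion (valid pointwise for $k\le n-1$ and a.e.\ for $k=n$). The one nontrivial ingredient is the collapse of the alternating binomial sum to $(1-1)^{i-j}$; everything else is routine.
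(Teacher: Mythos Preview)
Your proposal is correct and follows essentially the same approach as the paper, which simply leaves part (i) as a technical exercise and derives part (ii) from the Leibniz rule together with part (i). You have filled in the omitted details---the triangular structure, the identity $\binom{i}{k}\binom{k}{j}=\binom{i}{j}\binom{i-j}{k-j}$, and the alternating binomial collapse---accurately.
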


\begin{proof}
Part (i) of the lemma is a technical exercise and it is left to the reader. The first equality in part (ii) is the $n$-th derivation rule applied to the product $t^n f$, and then the second equality in (ii) is a consequence of part (i).
\end{proof}

As an interesting fact we remark that $c_n=\sum_{k=0}^n c_{n,k}$, $n\ge 0$, is a noted sequence (see reference A002720 in  ``The On-Line Encyclopedia of Integer Sequences'' by N.J.A. Sloane).

\medskip
In the next lemma, $e_\lambda(t):=e^{-\lambda t}$ for $\lambda\in\C^+$, $t\ge 0$.

\begin{lemma}\label{dense} Let $n$ be a nonnegative integer.
\begin{itemize}
\item[(i)] For every $\lambda\in\C^+$, the function $e_{\lambda}$ belongs to $\Tn2$ and
$$
\Vert e_\lambda \Vert_{2,(n)}
= \left({(2n)!\over 2^{2n+1}}\right)^{1\over 2}{\vert \lambda\vert^n\over (\Re \lambda)^{n+{1\over 2}}}.
$$
\item[(ii)]  The linear subspace $\mathcal E$ spanned by  $\{e_{\lambda}:\Re\lambda>0\}$ is dense in $\Tn2$.
\end{itemize}

\end{lemma}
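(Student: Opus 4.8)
The plan is to treat the two parts separately: part (i) by a direct computation exploiting that $e_{\lambda}$ is an eigenfunction of differentiation, and part (ii) by the standard Hilbert-space criterion that the span of a family is dense precisely when its orthogonal complement is trivial, converting the orthogonality relations into the vanishing of a Laplace transform.

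For (i), I would first record that $e_{\lambda}$ is smooth with $e_{\lambda}^{(k)}(t)=(-\lambda)^{k}e^{-\lambda t}$, so that $t^{k}e_{\lambda}^{(k)}(t)=(-\lambda)^{k}t^{k}e^{-\lambda t}$ lies in $L^{2}(\R^{+})$ for every $k=0,\dots,n$ (each integral is a Gamma integral, finite because $\Re\lambda>0$). By the structural description of $\Tn2$ recorded after Definition \ref{defi} this already places $e_{\lambda}$ in $\Tn2$; alternatively, using the last part of Proposition \ref{integralderi}, one checks directly that $\varphi:=(-1)^{n}e_{\lambda}^{(n)}=\lambda^{n}e_{\lambda}\in L^{2}(t^{n})$ is the $W^{-n}$-preimage, i.e. $W^{-n}(\lambda^{n}e_{\lambda})=e_{\lambda}$. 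The norm is then immediate:
$$
\Vert e_{\lambda}\Vert_{2,(n)}^{2}=\int_{0}^{\infty}|t^{n}e_{\lambda}^{(n)}(t)|^{2}\,dt
=|\lambda|^{2n}\int_{0}^{\infty}t^{2n}e^{-2(\Re\lambda)t}\,dt
=|\lambda|^{2n}\,\frac{(2n)!}{(2\Re\lambda)^{2n+1}},
$$
and taking square roots yields the stated formula. This part is entirely routine.

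The substance is in (ii). I would argue that $\mathcal E^{\perp}=\{0\}$: suppose $g\in\Tn2$ satisfies $\langle e_{\lambda},g\rangle_{(n)}=0$ for every $\lambda$ with $\Re\lambda>0$, and aim to prove $g=0$. Writing the inner product explicitly and using $e_{\lambda}^{(n)}=(-\lambda)^{n}e_{\lambda}$, the hypothesis becomes
$$
(-\lambda)^{n}\int_{0}^{\infty}e^{-\lambda t}\,\overline{g^{(n)}(t)}\,t^{2n}\,dt=0,\qquad \Re\lambda>0.
$$
Since $(-\lambda)^{n}\neq0$ on $\C^{+}$, this asserts that the Laplace transform of $h(t):=\overline{g^{(n)}(t)}\,t^{2n}$ vanishes identically on $\C^{+}$. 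The first point to verify is that this transform is meaningful: factoring $e^{-\lambda t}h(t)=\bigl[t^{n}e^{-(\Re\lambda)t}\bigr]\cdot\bigl[\,\overline{t^{n}g^{(n)}(t)}\,\bigr]$ and applying Cauchy--Schwarz, the integral is dominated by $\Vert t^{n}e^{-(\Re\lambda)t}\Vert_{2}\,\Vert g\Vert_{2,(n)}<\infty$, so $\LL h$ is a well-defined holomorphic function on $\C^{+}$.

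Finally I would invoke injectivity of the Laplace transform: a locally integrable function whose Laplace transform converges and vanishes on a right half-plane is zero almost everywhere. Concretely, fixing $\ep>0$ and setting $h_{\ep}(t):=e^{-\ep t}h(t)$, one has $h_{\ep}\in L^{1}(\R^{+})$ and $\widehat{h_{\ep}}(\xi)=\LL h(\ep+i\xi)=0$ for all $\xi\in\R$; Fourier injectivity on $L^{1}$ then forces $h_{\ep}=0$, whence $h=0$ a.e. Therefore $g^{(n)}(t)\,t^{2n}=0$ a.e., so $g^{(n)}=0$ a.e. on $(0,\infty)$, giving $\Vert g\Vert_{2,(n)}=0$ and thus $g=0$ in $\Tn2$. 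The only genuinely delicate step is the Laplace/Fourier uniqueness argument together with the integrability needed to make the transform legitimate; everything else is bookkeeping with the explicit inner product.
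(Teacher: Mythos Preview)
Your proof is correct. Part (i) is handled exactly as in the paper (which simply calls it ``straightforward''). For part (ii) both arguments reduce to Laplace-transform injectivity, but the packaging differs slightly. The paper observes that
\[
\int_0^\infty t^{2n}e^{-\lambda t}f^{(n)}(t)\,dt=(-1)^n\bigl(\LL(t^n f^{(n)})\bigr)^{(n)}(\lambda),
\]
so the vanishing of this quantity forces $\LL(t^n f^{(n)})$ to be a polynomial; since $t^n f^{(n)}\in L^2(\R^+)$, Paley--Wiener puts its transform in $H_2$, and a polynomial in $H_2$ is zero. You instead treat the integral directly as $\LL(h)$ with $h=t^{2n}\overline{g^{(n)}}$ and invoke $L^1$ Fourier uniqueness after the $e^{-\varepsilon t}$ regularisation. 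Your route is marginally more elementary in that it avoids Paley--Wiener altogether; the paper's route has the mild advantage of staying with the $L^2$ function $t^n f^{(n)}$ throughout, so no integrability check (your Cauchy--Schwarz step) is needed. Either way the core idea is the same.
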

\begin{proof} The proof of part (i) is straightforward. Now take $f \in \Tn2$ such that
$\left\langle f,e_\lambda \right\rangle_{(n)}=0$ for $\lambda \in \CC^+$, i.e.,
$$
0=(-1)^n\lambda^n\int_0^\infty t^{2n}e^{-\lambda t}f^{(n)}(t) dt= \lambda^n{\mathcal L}(t^nf^{(n)})^{(n)}(\lambda), \quad \lambda \in \C^+,
$$
whence one has that ${\mathcal L}(t^nf^{(n)})(\lambda)$ must be a polynomial. Since $t\mapsto t^nf^{(n)}$ is in $L^2(\R^+)$ that polynomial is zero by the Paley-Wiener theorem. Then, as
the Laplace transform ${\mathcal L} $ is one-to-one, we conclude that $t^nf^{(n)}(t)=0$ in $L^2(\RR^+)$, whence $f=0$ in $\Tn2$ and therefore $\mathcal E$ is dense in $\Tn2$.
\end{proof}

The following result gives us the desired expression of  the inner product of the space $\Tn2$. In the proof we use Laguerre polynomials and Legendre polynomials.

\begin{proposition} \label{teoLii} Let $f,g \in \Tn2$. Then
\begin{equation} \label{eqinnerma}
\left\langle f, g \right\rangle_{(n)} = \left\langle (t^{n}f)^{(n)},(t^{n}g)^{(n)} \right\rangle.
\end{equation}
\end{proposition}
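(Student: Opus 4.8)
The plan is to use the density of $\mathcal E$ in $\Tn2$ (Lemma~\ref{dense}(ii)) to reduce the identity \eqref{eqinnerma} to the case of two exponentials $e_\lambda,e_\mu$, and then to evaluate both sides explicitly: the left-hand side by an elementary integral, and the right-hand side by means of the Rodrigues formula for Laguerre polynomials. Before reducing, one checks that both sides of \eqref{eqinnerma} are bounded sesquilinear forms on $\Tn2$. The left-hand side is the inner product itself. For the right-hand side, Lemma~\ref{clave}(ii) yields $(t^{n}f)^{(n)}=\sum_{k=0}^{n}c_{n,k}t^{k}f^{(k)}$, and each summand lies in $L^2(\R^+)$ with $\Vert t^{k}f^{(k)}\Vert_2=\Vert f\Vert_{2,(k)}\le C_{n,k}\Vert f\Vert_{2,(n)}$ by the embeddings \eqref{embedding}; hence $\Vert (t^{n}f)^{(n)}\Vert_2\le C\Vert f\Vert_{2,(n)}$ and $(f,g)\mapsto\langle (t^{n}f)^{(n)},(t^{n}g)^{(n)}\rangle$ is continuous. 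Since $\mathcal E$ is dense, it then suffices to prove \eqref{eqinnerma} for $f=e_\lambda$ and $g=e_\mu$ with $\lambda,\mu\in\C^+$.

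For the left-hand side, $(e_\lambda)^{(n)}=(-\lambda)^{n}e_\lambda$ gives
$$
\langle e_\lambda,e_\mu\rangle_{(n)}=(\lambda\overline{\mu})^{n}\int_0^\infty t^{2n}e^{-(\lambda+\overline{\mu})t}\,dt=\frac{(2n)!\,(\lambda\overline{\mu})^{n}}{(\lambda+\overline{\mu})^{2n+1}}.
$$
For the right-hand side, the essential computation is the scaled Rodrigues formula
$$
(t^{n}e_\lambda)^{(n)}(t)=n!\,e^{-\lambda t}L_n(\lambda t),
$$
which follows from $\tfrac{d^n}{dx^n}(x^{n}e^{-x})=n!\,L_n(x)e^{-x}$ through the change of variable $x=\lambda t$; here $L_n$ denotes the $n$-th Laguerre polynomial. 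Consequently
$$
\langle (t^{n}e_\lambda)^{(n)},(t^{n}e_\mu)^{(n)}\rangle=(n!)^{2}\int_0^\infty e^{-(\lambda+\overline{\mu})t}L_n(\lambda t)L_n(\overline{\mu}\,t)\,dt,
$$
the integral converging absolutely because $\Re(\lambda+\overline{\mu})>0$.

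It remains to evaluate this Laguerre product integral, and this is where the Legendre polynomials $P_n$ enter and where I expect the main obstacle. The classical closed form, valid for $\Re s>0$ and $s\neq a+b$,
$$
\int_0^\infty e^{-st}L_n(at)L_n(bt)\,dt=\frac{(s-a-b)^{n}}{s^{n+1}}\,P_n\!\left(\frac{s^{2}-(a+b)s+2ab}{s(s-a-b)}\right),
$$
degenerates exactly at the value $s=a+b$ relevant here (with $a=\lambda$, $b=\overline{\mu}$, $s=\lambda+\overline{\mu}$): the prefactor vanishes while the argument of $P_n$ tends to infinity. Since the left-hand integral is a continuous (indeed analytic) function of $s$ for $\Re s>0$, its value at $s=a+b$ equals the limit of the right-hand side as $s\to a+b$. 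To compute that limit I would use that the leading coefficient of $P_n$ is $\tfrac{(2n)!}{2^{n}(n!)^{2}}$, so that $P_n(x)\sim\tfrac{(2n)!}{2^{n}(n!)^{2}}\,x^{n}$ as $x\to\infty$; carrying out the limit gives
$$
\int_0^\infty e^{-(a+b)t}L_n(at)L_n(bt)\,dt=\frac{(2n)!}{(n!)^{2}}\,\frac{(ab)^{n}}{(a+b)^{2n+1}}.
$$
(Equivalently, one may present the integral through a terminating ${}_2F_1(-n,-n;1;\,\cdot\,)$, whose argument equals $1$ when $s=a+b$, and invoke Gauss's summation theorem to obtain the factor $\tfrac{(2n)!}{(n!)^{2}}$.) Multiplying by $(n!)^{2}$ and substituting $a=\lambda$, $b=\overline{\mu}$ produces $\tfrac{(2n)!(\lambda\overline{\mu})^{n}}{(\lambda+\overline{\mu})^{2n+1}}$, which coincides with the left-hand side computed above. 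This proves \eqref{eqinnerma} on $\mathcal E\times\mathcal E$, and by the continuity and density established at the outset it holds on all of $\Tn2$.
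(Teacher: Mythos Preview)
Your proof is correct and follows essentially the same route as the paper: reduce by density of $\mathcal E$ and continuity (via Lemma~\ref{clave}(ii) and the embeddings~\eqref{embedding}) to exponentials, express $(t^n e_\lambda)^{(n)}$ through the Rodrigues formula for Laguerre polynomials, and evaluate the resulting integral via the Legendre-polynomial identity, taking the limit $s\to a+b$ using the leading coefficient of $P_n$. The only cosmetic differences are that the paper first treats real $\lambda,\mu>0$ before passing to $\C^+$, and places the continuity/density argument at the end rather than the beginning.
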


\begin{proof} Let $e_{\lambda}(t):=e^{-\lambda t}$ and $e_{\mu}(t):=e^{-\mu t}$, with $\lambda,\mu > 0$. Then, by definition \ref{defi},
$$
\left\langle e_{\lambda},e_{\mu} \right\rangle_{(n)}=\left\langle t^{n}e_{\lambda}^{(n)},t^{n}e_{\mu}^{(n)} \right\rangle
= (\lambda\mu)^{n} \int_{0}^{\infty} t^{2n} e^{-(\lambda+\mu)t} dt = \frac{(\lambda\mu)^{n} (2n)!}{(\lambda+\mu)^{2n+1}}.
$$
For the second inner product in (\ref{eqinnerma}), note that
$$
(t^{n}e_{\lambda})^{(n)}(t) = n! \,\La_{n}(\lambda t),
$$
where $\La_{n}$ is the Laguerre polynomial of degree $n$. Therefore,
$$
\left\langle (t^{n}e_{\lambda})^{(n)},(t^{n}e_{\mu})^{(n)} \right\rangle = (n!)^{2} \int_{0}^{\infty}\La_{n}(\lambda t)\La_{n}(\mu t) e^{-(\lambda+\mu)t}dt.
$$
Then, according to \cite[7.414 (2)]{gra-ryz},
$$
\int_{0}^{\infty}\La_{n}(\lambda x)\La_{n}(\mu x)e^{-bx}dx = \frac{(b-\lambda-\mu)^{n}}{b^{n+1}} \Po_{n}\left(\frac{b^{2}-(\lambda+\mu)b+2\lambda\mu}{b(b-\lambda-\mu)}\right),
$$
for $\Re b >0$, where $\Po_{n}$ is the Legendre polynomial of degree $n$. Therefore,
\begin{eqnarray*}
&\quad&\langle (t^{n}e_{\lambda})^{(n)},(t^{n}e_{\mu})^{(n)} \rangle = (n!)^{2} \displaystyle\lim_{b\rightarrow \lambda+\mu} \frac{(b-\lambda-\mu)^{n}}{b^{n+1}} \Po_{n}\left(\frac{b^{2}-(\lambda+\mu)b+2\lambda\mu}{b(b-\lambda-\mu)}\right)\cr
&\quad&\qquad  = (n!)^{2} \displaystyle\lim_{b\rightarrow \lambda+\mu} \frac{(b-\lambda-\mu)^{n}}{b^{n+1}} \frac{(2n)!}{2^{n}(n!)^{2}} \left(\frac{b^{2}-(\lambda+\mu)b+2\lambda\mu}{b(b-\lambda-\mu)}\right)^{n}\cr&\quad&\qquad  = \displaystyle\frac{(2n)!(\lambda\mu)^{n}}{(\lambda+\mu)^{2n+1}},
\end{eqnarray*}
because $(2n)!\ 2^{-n}(n!)^{-2}$ is the leading coefficient of $\Po_{n}$, as can be seen in \cite[Section 5.4.2]{mag-obe-son}.  We conclude that
$$
\left\langle e_{\lambda},e_{\mu} \right\rangle_{(n)} = \left\langle (t^{n}e_{\lambda})^{(n)},(t^{n}e_{\mu})^{(n)} \right\rangle, \quad \lambda, \mu \in \CC^+,
$$
and then by linearity
$\left\langle f,g \right\rangle_{(n)} = \left\langle (t^n f)^{(n)},(t^{n} g)^{(n)} \right\rangle,$
for $f,g\in\mathcal E$.
Finally for arbitrary $f, g \in\Tn2$ it suffices to apply
Lemma \ref{clave} (ii), (\ref{embedding}) and the density of
$\mathcal E$ in $\Tn2$ (Lemma \ref{dense} (ii)) to obtain
$\left\langle f, g \right\rangle_{(n)} = \left\langle (t^{n}f)^{(n)},(t^{n}g)^{(n)} \right\rangle$. The proof is over.
\end{proof}

\medskip
To finish this section we show an example of function in the space
$\Tn2$ which will be used, in Section 5 below, as a key to introduce reproducing kernels in Hardy-Sobolev spaces on the half-plane $\C^+$.

\begin{proposition} \label{key} For $w \in \C^+$, $n\in \N$ and $t>0$, define
\begin{eqnarray*}
g_{w, n}(t)
&:=& \int_{t}^{\infty}\frac{(s-t)^{n-1}}{s^{n}}
\int_{0}^{1}\frac{(1-x)^{n-1}}{((n-1)!)^{2}}e^{-sxw} dx ds\cr
\end{eqnarray*}

Then $\gwn$ belongs to $\Tn2$ with $n$-th derivative satisfying
$$
t^{n}\gwn^{(n)}(t) = \frac{(-1)^{n}}{t^{n}}\int_{0}^{t}\frac{(t-s)^{n-1}}{(n-1)!}e^{-w s} ds.
$$
In particular, $\displaystyle\Vert g_{w, n}\Vert_{2, (n)}
\le {2\log2\over\sqrt{\Re w}}$.
\end{proposition}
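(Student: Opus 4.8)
The plan is to recognize $g_{w,n}$ as the image under the integration operator $W^{-n}$ of an explicit function, read off its $n$-th derivative from Proposition \ref{integralderi}, and then estimate the norm by collapsing the general case onto $n=1$. First I would set
$$
\psi(s):=\frac{1}{(n-1)!\,s^{n}}\int_{0}^{1}(1-x)^{n-1}e^{-sxw}\,dx,\qquad s>0,
$$
and observe, from the explicit formula $W^{-n}\varphi(t)=\frac{1}{(n-1)!}\int_t^\infty(s-t)^{n-1}\varphi(s)\,ds$ recalled before Proposition \ref{integralderi}, that $g_{w,n}=W^{-n}\psi$. The function $t^{n}\psi(t)=\frac{1}{(n-1)!}\int_0^1(1-x)^{n-1}e^{-txw}\,dx$ is bounded (by $1/n!$) and is $O(1/t)$ as $t\to\infty$, hence lies in $L^2(\R^+)$; thus $\psi\in L^2(t^n)$ and $g_{w,n}\in\Tn2=W^{-n}(L^2(t^n))$. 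The identity $(-1)^n(W^{-n}\psi)^{(n)}=\psi$ of Proposition \ref{integralderi} then yields $g_{w,n}^{(n)}=(-1)^n\psi$, so that
$$
t^{n}g_{w,n}^{(n)}(t)=\frac{(-1)^{n}}{(n-1)!}\int_{0}^{1}(1-x)^{n-1}e^{-txw}\,dx ,
$$
and the substitution $s=tx$ identifies the right-hand side with the stated expression $\frac{(-1)^n}{t^n}\int_0^t\frac{(t-s)^{n-1}}{(n-1)!}e^{-ws}\,ds$.

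For the norm I would use $\|g_{w,n}\|_{2,(n)}^2=\|t^ng_{w,n}^{(n)}\|_2^2$. Writing $a:=\Re w>0$ and bounding $|e^{-txw}|=e^{-txa}$, the crucial step is the elementary inequality $\frac{(1-x)^{n-1}}{(n-1)!}\le 1$, valid for every $x\in[0,1]$ and every $n\ge1$, which replaces the $n$-dependent integrand by the one of the case $n=1$:
$$
\|g_{w,n}\|_{2,(n)}^2\le\int_0^\infty\!\Big(\int_0^1\tfrac{(1-x)^{n-1}}{(n-1)!}e^{-txa}\,dx\Big)^2 dt\le\int_0^\infty\!\Big(\int_0^1 e^{-txa}\,dx\Big)^2 dt=\int_0^\infty\frac{(1-e^{-ta})^2}{(ta)^2}\,dt .
$$
The change of variable $\tau=ta$ turns this into $a^{-1}\int_0^\infty\tau^{-2}(1-e^{-\tau})^2\,d\tau$; one integration by parts reduces the remaining integral to $2\int_0^\infty\tau^{-1}(e^{-\tau}-e^{-2\tau})\,d\tau$, which by the Frullani formula equals $2\log 2$. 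Hence $\|g_{w,n}\|_{2,(n)}^2\le 2\log2/a$, and since $2\log2\ge 1$ we get $\|g_{w,n}\|_{2,(n)}\le\sqrt{2\log2}/\sqrt a\le 2\log2/\sqrt{\Re w}$.

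I expect the only genuine difficulty to be pinning down the constant uniformly in $n$. The naive pointwise estimates $|t^ng_{w,n}^{(n)}|\le\min\bigl(1/n!,\,1/((n-1)!\,at)\bigr)$ give, after optimizing the split, $\|g_{w,n}\|_{2,(n)}^2\le 2/\bigl(a\,n((n-1)!)^2\bigr)$, which is comfortable for $n\ge 2$ but equals $2/a$ at $n=1$ and so fails to reach the target $(2\log2)^2/a$; likewise Minkowski's integral inequality gives $\sqrt{\pi}/(\sqrt2\,\Gamma(n+\tfrac12)\sqrt a)$, again too large precisely at $n=1$. What makes the estimate work for all $n$ at once, and what produces exactly the constant $2\log 2$ of the statement, is the reduction via $\frac{(1-x)^{n-1}}{(n-1)!}\le 1$ to the single integral $\int_0^\infty\tau^{-2}(1-e^{-\tau})^2\,d\tau=2\log2$; the membership in $\Tn2$ and the derivative formula are then routine consequences of Proposition \ref{integralderi}.
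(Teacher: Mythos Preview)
Your argument is correct and follows essentially the same route as the paper: identify $g_{w,n}=W^{-n}\psi$ with $\psi\in L^2(t^n)$, read off the $n$-th derivative from Proposition~\ref{integralderi}, and then bound $\|t^n g_{w,n}^{(n)}\|_2^2$ by replacing $(1-x)^{n-1}$ with $1$ and evaluating the resulting integral $\int_0^\infty (1-e^{-at})^2/(at)^2\,dt=2\log 2/a$. The only cosmetic differences are that the paper verifies $\psi\in L^2(t^n)$ via Minkowski's integral inequality rather than your pointwise $O(1)\cap O(1/t)$ argument, and that the paper keeps the extra factor $1/(n-1)!$ in the final estimate (using $(1-x)^{n-1}\le 1$ rather than $(1-x)^{n-1}/(n-1)!\le 1$), obtaining the slightly sharper $\|g_{w,n}\|_{2,(n)}^2\le 2\log 2/\bigl((n-1)!\,\Re w\bigr)$; either way the stated bound follows.
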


\begin{proof} Take $w\in\C^+$ and $n\in\N$.
Put
$$
\varphi_{w,n}(t):=t^{-n}\int_{0}^{1}\frac{(1-x)^{n-1}}{(n-1)!}\ e^{-txw} dx, \quad t>0.
$$

Then $\varphi_{w,n}\in L^2(t^n)$. In fact,
\begin{eqnarray*}
\left(\int_0^\infty\vert\varphi(t)\vert^2 t^{2n}\ dt\right)^{1/2}
&\le&\left(\int_0^\infty
\left(\int_0^1{(1-x)^{n-1}\over(n-1)!}\ e^{-(\Re w)tx} dx\right)^2 dt\right)^{1/2}\cr
&\le&\int_0^1\frac{(1-x)^{n-1}}{(n-1)!}
\left(\int_0^\infty e^{-2(\Re w)tx}dt\right)^{1/2}dx\cr
&=&{\sqrt{\pi/2}\over{\Gamma(n+(1/2))\sqrt{\Re w}}}
\end{eqnarray*}
where the second inequality is Minkowski`s inequality .

Hence, $g_{w,n}=W^{-n}\varphi_{w,n}$ is in $\Tn2$ and, moreover, by
Proposition \ref{integralderi},
$$
{\gwn^{(n)}(t)} ={(-1)^{n}\over t^n}\int_{0}^{1}\frac{(1-x)^{n-1}}{(n-1)!}e^{-txw} dx
=\frac{(-1)^{n}}{t^{2n}}\int_{0}^{t}\frac{(t-s)^{n-1}}{(n-1)!}e^{-w s} ds,
$$
for every $t>0$. Therefore,
\begin{eqnarray*}
\Vert \gwn\Vert_{2,(n)}^2&=&\|t^{n}\gwn^{(n)}\|_2
= \displaystyle\int_{0}^{\infty} \Big|\int_{0}^{1}\frac{(1-x)^{n-1}}{(n-1)!}e^{-txw} dx\Big|^{2}dt\cr
&\leq& \displaystyle\frac{1}{(n-1)!} \int_{0}^{\infty} \left(\int_{0}^{1}e^{-\Re w tx} dx\right)^{2}dt\cr
&=& \displaystyle\frac{1}{(n-1)!(\Re w)^2} \int_{0}^{\infty}\frac{(1-e^{-\Re w t})^{2}}{t^{2}}dt
= \displaystyle\frac{2\log2}{(n-1)!\Re w}.
\end{eqnarray*}
\end{proof}

%%%%%%%%%%%%%%%%%%%%%%%%%%%%
%%%%%%%%%%%%%%%%%%%%%%%%%%%%
\section{{The Hardy-Sobolev spaces $H^{(n)}_2$}}\label{HardySobolev}
%%%%%%%%%%%%%%%%%%%%%%%%%%%%

Recall that $H_2$ is the Hilbertian Hardy space over the right-hand half-plane $\C^+$ endowed with the norm defined in (\ref{Norm01}), which corresponds to the inner product
$$
(F\mid G)=\frac{1}{2\pi }\int_{-\infty}^\infty F^*(it)\overline{G^*(it)}dt, \quad F,G\in H_2,
$$
where $F^*(t)=\lim_{z\to it}F(s)$ a.e. $t\in\R$ for every $F,G\in H_2$; see \cite[Chapter VI]{Duren} and \cite[Chapter 8]{Hoffman}.
The classical Paley-Wiener theorem states that the Laplace transform ${\mathcal L}: L^2(\R^+) \to H_2$,
$$
{\mathcal L}(f)(z)=\int_0^\infty f(t)e^{-zt}dt, \quad f\in L^2(\R^+), \quad z\in \C^+,
$$
is  an isometric isomorphism; i.e., $F, G\in H_2 $ if and only if there exist unique $ f,g\in L^2(\R^+)$ such that
$ F=\LL f$ and  $ G=\LL g$ and
$$
\left\langle f,g \right\rangle=\left( \LL f\mid \LL g \right) ,  \quad f,g\in L^2(\R^+),
 $$
(see for example \cite[Theorem 19.2]{Rud87}).
Recall that the space $\Tn2$ introduced in the preceding section is a subspace of $L_2(\R^+)$. The scale of  range spaces $\LL(\Tn2)$, $n\in\N$, is characterized in \cite{Roy}. In the following we provide a shorter proof of that characterization.

\medskip
Let $k$ a positive integer. Integrating by parts $k$ times, one gets
\begin{equation}\label{igua2}
z^{k}\LL h(z) = \LL(h^{(k)})(z) + \sum_{j=0}^{k-1}z^{k-1-j}h^{(j)}(0), \quad z\in \CC^+,
\end{equation}
provided the function $h$ is differentiable $k-1$ times on the interval $(0,\infty)$, the derivatives $h^{(j)}$ are continuous
on $[0,\infty)$, and the functions
$h^{(j)}e^{-z(\cdot)}$ belong to $L^1(\R^+)$ for all $z\in\C^+$ and $j=0,1,\dots,k$.

\begin{lemma}\label{derilaplace} For every $n\in\N$, $z\in\overline\C^+\setminus\{0\}$ and
$f\in\mathcal T^{(n)}_2$,
\begin{eqnarray*}
(-1)^k z^k(\mathcal{L}f)^{(k)}(z)&=&\sum_{j=0}^{k}{k\choose j}{k!\over j!}\mathcal{L}(t^{j}f^{(j)})(z)\ ,
\qquad k=0,1,\dots,n;\cr
(-1)^k \mathcal{L}(t^kf^{(k)})(z)&=&\sum_{j=0}^{k}{k\choose j}{k!\over j!}z^j(\mathcal{L}f)^{(j)}(z)\ ,
\qquad k=0,1,\dots,n;\cr
\end{eqnarray*}
\end{lemma}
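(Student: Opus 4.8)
The plan is to establish the first identity for $z\in\C^+$ by three elementary moves and then obtain the second one purely algebraically by inverting the triangular linear system via Lemma \ref{clave}~(i); the passage to the boundary $i\R\setminus\{0\}$ will be handled at the end by continuity and is where the only real difficulty lies.

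First I would record the differentiation-under-the-integral-sign identity $(\mathcal{L}f)^{(k)}(z)=(-1)^k\mathcal{L}(t^kf)(z)$ for $z\in\C^+$, which is justified because, for $\Re z\ge\delta>0$, the integrand $t^kf(t)e^{-zt}$ is dominated by $t^k|f(t)|e^{-\delta t}$, a function in $L^1(\R^+)$ by Cauchy--Schwarz (as $f\in L^2(\R^+)$ and $t^ke^{-\delta t}\in L^2(\R^+)$). Multiplying by $(-1)^kz^k$ cancels the signs and reduces the left-hand side of the first identity to $z^k\mathcal{L}(t^kf)(z)$. Next I would apply the integration-by-parts formula (\ref{igua2}) to $h:=t^kf$. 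The decisive point is that \emph{all boundary terms vanish}: by Leibniz' rule $h^{(j)}(t)=\sum_{i=0}^{j}\binom{j}{i}\frac{k!}{(k-i)!}t^{k-i}f^{(j-i)}(t)$, and for $0\le j\le k-1$ the pointwise estimate (\ref{pointestim}) (equivalently the global bound coming from Proposition \ref{integralderi}) gives $|t^{k-i}f^{(j-i)}(t)|\le C\,t^{k-j-1/2}$, whose exponent $k-j-\tfrac12\ge\tfrac12$ is positive, so each $h^{(j)}$ extends continuously to $[0,\infty)$ with $h^{(j)}(0)=0$; the same bound (valid for all $t>0$) together with the exponential factor secures the hypotheses $h^{(j)}e^{-z(\cdot)}\in L^1(\R^+)$. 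Hence (\ref{igua2}) yields $z^k\mathcal{L}(t^kf)(z)=\mathcal{L}\big((t^kf)^{(k)}\big)(z)$. Finally, expanding $(t^kf)^{(k)}$ by the $k$-th derivation rule of Lemma \ref{clave}~(ii) (applied with $n$ replaced by $k$, legitimate since $f\in\mathcal T^{(k)}_2$ by (\ref{embedding})), namely $(t^kf)^{(k)}=\sum_{j=0}^{k}c_{k,j}\,t^jf^{(j)}$ with $c_{k,j}=\binom{k}{j}\frac{k!}{j!}$, and taking Laplace transforms term by term gives exactly the first identity on $\C^+$.

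For the second identity I would read the first one, at the fixed value $z$, as a single lower-triangular linear system: setting $a_j:=\mathcal{L}(t^jf^{(j)})(z)$ and $b_k:=(-1)^kz^k(\mathcal{L}f)^{(k)}(z)$, the first identity says $b_k=\sum_{j=0}^{k}c_{k,j}a_j$ for $0\le k\le n$, i.e. $\mathbf b=C_n\mathbf a$. Inverting with the explicit inverse $C_n^{-1}=\big((-1)^{i+j}c_{i,j}\big)$ from Lemma \ref{clave}~(i) gives $a_k=\sum_{j=0}^{k}(-1)^{k+j}c_{k,j}b_j=(-1)^k\sum_{j=0}^{k}c_{k,j}z^j(\mathcal{L}f)^{(j)}(z)$, which upon multiplying by $(-1)^k$ is precisely the second identity. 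No new analysis is needed here.

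It remains to pass from $z\in\C^+$ to $z\in\overline{\C}^+\setminus\{0\}$, and this is the step I expect to be the main obstacle. Both sides are holomorphic on $\C^+$, so it suffices to check that each term $z\mapsto z^j(\mathcal{L}f)^{(j)}(z)$ and $z\mapsto\mathcal{L}(t^jf^{(j)})(z)$ admits a continuous extension to $i\R\setminus\{0\}$. For $j\le n-1$ this can be arranged: writing $g:=t^jf^{(j)}$, one has $g\in L^1(0,T)$ near the origin (since $|g(t)|\le C t^{-1/2}$) and $g'\in L^1(T,\infty)$ near infinity (because $g'$ is a combination of $t^{-1}(t^{j}f^{(j)})$ and $t^{-1}(t^{j+1}f^{(j+1)})$ with the bracketed factors in $L^2(\R^+)$, so $\int_T^\infty t^{-1}|\cdot|\,dt<\infty$ by Cauchy--Schwarz); an integration by parts on $(T,\infty)$ then shows $\mathcal{L}g$ is continuous up to $i\R\setminus\{0\}$. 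The genuinely delicate term is the top-order one $j=n$, where $g=t^nf^{(n)}$ is only known to lie in $L^2(\R^+)$ (its derivative would involve $f^{(n+1)}$, which need not exist as a controlled function); there $\mathcal{L}(t^nf^{(n)})$ is best understood through its $H_2$ boundary values, and the identity then holds almost everywhere on $i\R$. I would therefore phrase the boundary statement with this caveat, the clean pointwise identity being valid on all of $\C^+$ and, for the lower-order terms, on $\overline{\C}^+\setminus\{0\}$.
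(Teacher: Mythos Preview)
Your argument is correct and follows essentially the same path as the paper: verify the vanishing of the boundary terms $(t^kf)^{(j)}(0)$ for $0\le j\le k-1$ via Leibniz and the pointwise bound (\ref{pointestim}), apply (\ref{igua2}) to $h=t^kf$ to obtain $(-1)^kz^k(\mathcal{L}f)^{(k)}(z)=\mathcal{L}\big((t^kf)^{(k)}\big)(z)$, expand by Lemma~\ref{clave}(ii), and invert the triangular system with Lemma~\ref{clave}(i). The paper's own proof actually works only for $z\in\C^+$ and does not address the boundary $i\R\setminus\{0\}$ at all, so your extended discussion of the boundary case (and the caveat you raise about the top-order term) goes beyond what the paper provides.
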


\begin{proof}
Let $j, k$ be integers such that $0\le j\le k\le n$. Using Leibniz's rule of derivation in $(t^kf)^{(j)}$ and the bound given by
(\ref{pointestim}) one finds that
$$
(t^kf)^{(j)}(0):=\lim_{t\to0^+}(t^kf)^{(j)}(t)=0, \quad j\in\{0,1,\dots,k-1\}.
$$
Then taking $h=t^kf$ in (\ref{igua2}) we get
\begin{equation}\label{laplacederi}
\LL((t^kf)^{(k)})(z) = z^{k}\LL(t^kf)(z)=(-1)^kz^k(\LL f)^{(k)}(z), \quad z\in\C^+.
\end{equation}

Now, using Lemma \ref{clave}, part (ii), we obtain
$$
(-1)^kz^k(\LL f)^{(k)}(z)=\sum_{j=o}^k c_{k,j}\LL(t^jf^{(j)})(z), \quad z\in\C^+,
$$
where coefficients $c_{k,j}$ are as in that lemma. Applying the invertibility of the matrix $(c_{k,j})$ one also derives
$$
\LL(t^jf^{(j)})(z)=(-1)^j\sum_{k=o}^j c_{j,k}z^k(\LL f)^{(k)}(z), \quad z\in\C^+,
$$
and the proof is over.
\end{proof}

\begin{definition}
Given $n\geq 0$ a fixed integer, let $H^{(n)}_2$ denote the linear space consisting of all analytic functions $F$ on $\C^+$ such that
\begin{equation}
z^kF^{(k)}\in H_2, \quad k=0,1,\dots,n.
\end{equation}
We use the notation $H^{(0)}_2=H_2$.
\end{definition}

Here is the Paley-Wiener theorem for Hardy-Lebesgue spaces.

\begin{theorem} \label{teoroyo} {\rm (Extended Paley-Wiener Theorem)} Let $\LL$ be the Laplace transform $\LL:L_2(\R^+)\to H_2$. Then, for $n\in\NN$,
$$
\LL(\TniiRma)=\HiinCma.
$$

Consequently, $\HiinCma$ is a Hilbert space with respect to the inner product
$$
\left(F\mid G\right)_{(n)}:={1\over 2\pi}\int_{-\infty}^{\infty}
t^{2n}(F^*)^{(n)}(it)\overline{(G^*)^{(n)}(it)}\ dt, \quad F,G\in\HiinCma,
$$
and corresponding norm
$$
\Vert F\Vert_{2,(n)}:=\Vert z^nF^{(n)}\Vert_{2}, \quad F\in\HiinCma,
$$
and the Laplace transform is an isometric isomorphism from $\TniiRma$ onto $\HiinCma$;
that is, it is an onto linear isomorphism
having the property
$$
\left\langle f,g \right\rangle_{(n)}=\left( \LL f\mid\LL g \right)_{(n)},
\quad f,g\in\TniiRma.
$$
\end{theorem}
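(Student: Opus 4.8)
The plan is to establish the three assertions jointly: that $\mathcal{L}$ carries $\mathcal{T}^{(n)}_2$ onto $H^{(n)}_2$, that it intertwines the inner products in the stated way, and then to read off completeness of $H^{(n)}_2$ as a formal consequence. Throughout I would write $F=\mathcal{L}f$, $G=\mathcal{L}g$ and use freely the classical Paley--Wiener isometry $\mathcal{L}\colon L^2(\R^+)\to H_2$ together with the differentiation rules of Lemma \ref{derilaplace} and identity (\ref{laplacederi}).

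First I would dispatch the inclusion $\mathcal{L}(\mathcal{T}^{(n)}_2)\subseteq H^{(n)}_2$ and the isometry at the same time. For the inclusion, fix $f\in\mathcal{T}^{(n)}_2$; since $t^jf^{(j)}\in L^2(\R^+)$ for every $j\le n$ by (\ref{embedding}), the first formula of Lemma \ref{derilaplace} exhibits each $z^kF^{(k)}$, $k\le n$, as a finite linear combination of the $H_2$-functions $\mathcal{L}(t^jf^{(j)})$, whence $F\in H^{(n)}_2$. For the isometry I would use the identity $\mathcal{L}\big((t^nf)^{(n)}\big)=(-1)^n z^nF^{(n)}$, which follows from (\ref{laplacederi}) applied to $h=t^nf$ (the boundary terms vanishing exactly as in the proof of Lemma \ref{derilaplace}). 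Since $(t^nf)^{(n)}=\sum_k c_{n,k}t^kf^{(k)}\in L^2(\R^+)$ by Lemma \ref{clave}(ii), Proposition \ref{teoLii} and the classical isometry give
\[
\langle f,g\rangle_{(n)}=\big\langle (t^nf)^{(n)},(t^ng)^{(n)}\big\rangle=\big(z^nF^{(n)}\mid z^nG^{(n)}\big),
\]
and passing to boundary values $(z^nF^{(n)})^*(it)=(it)^n(F^*)^{(n)}(it)$, together with $|(it)^n|^2=t^{2n}$ for real $t$, rewrites the right-hand side as the stated boundary integral $(F\mid G)_{(n)}$. In particular $\|f\|_{2,(n)}=\|z^nF^{(n)}\|_2=\|F\|_{2,(n)}$, so $\mathcal{L}$ is isometric into $H^{(n)}_2$.

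The substance is surjectivity, and here I would argue directly rather than through an abstract density/closed-range dichotomy, since the latter runs into the (a priori unknown) completeness of $H^{(n)}_2$ in $\|\cdot\|_{2,(n)}$. Fix $F\in H^{(n)}_2$ and let $f=\mathcal{L}^{-1}F\in L^2(\R^+)$ come from the classical theorem; the task is to show $f\in\mathcal{T}^{(n)}_2$. For each fixed $k\le n$ put $h_k:=t^kf$; differentiating under the integral sign (legitimate since $\int_0^\infty t^k|f(t)|e^{-(\Re z)t}\,dt<\infty$ by Cauchy--Schwarz) gives $F^{(k)}=(-1)^k\mathcal{L}(h_k)$, hence
\[
z^k\mathcal{L}(h_k)=(-1)^k z^kF^{(k)}\in H_2=\mathcal{L}(L^2(\R^+)).
\]
So there is $\psi_k\in L^2(\R^+)$ with $\mathcal{L}(h_k)=z^{-k}\mathcal{L}\psi_k$. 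Recognising $z^{-k}\mathcal{L}\psi_k$ as the Laplace transform of the Riemann--Liouville primitive $t\mapsto\frac{1}{(k-1)!}\int_0^t(t-s)^{k-1}\psi_k(s)\,ds$ (which has at most polynomial growth, hence a Laplace transform, by Cauchy--Schwarz) and invoking injectivity of $\mathcal{L}$, I conclude that $h_k=t^kf$ equals that primitive almost everywhere; in particular $(t^kf)^{(k)}=\psi_k\in L^2(\R^+)$ and the lower boundary values $(t^kf)^{(j)}(0)$, $j<k$, all vanish.

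Having secured $(t^kf)^{(k)}\in L^2(\R^+)$ for every $k\le n$, the inverse matrix identity of Lemma \ref{clave}(ii), namely $t^kf^{(k)}=\sum_{j=0}^k(-1)^{j+k}c_{k,j}(t^jf)^{(j)}$, yields $t^kf^{(k)}\in L^2(\R^+)$ for every $k\le n$; since moreover $t^nf$ is $(n-1)$-times differentiable with absolutely continuous $(n-1)$-st derivative, the same holds for $f$ on $(0,\infty)$, and by the characterisation following (\ref{embedding}) we get $f\in\mathcal{T}^{(n)}_2$. This proves $H^{(n)}_2\subseteq\mathcal{L}(\mathcal{T}^{(n)}_2)$, hence equality; and since $\mathcal{T}^{(n)}_2$ is a Hilbert space and $\mathcal{L}$ is an inner-product-preserving bijection onto $H^{(n)}_2$, the latter inherits completeness, which gives the remaining assertions. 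I expect the one genuinely delicate point to be the inversion step above: because the pointwise bound (\ref{pointestim}) controls $f$ only up to a $t^{-1/2}$ singularity at the origin, one must argue carefully that multiplication by $z^k$ on the transform side corresponds to honest $k$-fold differentiation of $t^kf$ with no surviving boundary contribution—this is precisely what the identification with the Riemann--Liouville primitive, together with injectivity of $\mathcal{L}$, accomplishes.
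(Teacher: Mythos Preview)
Your argument is correct, and the forward inclusion and the isometry computation coincide with the paper's proof essentially verbatim (both invoke Lemma~\ref{derilaplace}, identity~(\ref{laplacederi}), and Proposition~\ref{teoLii}). The surjectivity step, however, follows a genuinely different route.

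The paper works only at the top level $k=n$: it forms the single combination $G(z)=(-1)^n\sum_{j=0}^n\binom{n}{j}\tfrac{n!}{j!}z^jF^{(j)}(z)\in H_2$, takes $g=\mathcal{L}^{-1}G$, builds a candidate $\tilde f:=W^{-n}(t^{-n}g)\in\mathcal{T}_2^{(n)}$ directly from the definition of that space, and then shows $\tilde f=f$ by observing that $z^n\mathcal{L}(\tilde f-f)$ must be a polynomial of degree $<n$ lying (after division by $z^n$) in $H_2$, hence zero. You instead exploit the hypothesis $z^kF^{(k)}\in H_2$ for \emph{every} $k\le n$: identifying each $t^kf$ with a Riemann--Liouville $k$-fold primitive of $\psi_k:=\mathcal{L}^{-1}\bigl((-1)^kz^kF^{(k)}\bigr)\in L^2(\R^+)$ via Laplace uniqueness, you obtain $(t^kf)^{(k)}\in L^2(\R^+)$ for all $k$, then invert the Leibniz matrix of Lemma~\ref{clave} to conclude $t^kf^{(k)}\in L^2(\R^+)$ and invoke the description of $\mathcal{T}_2^{(n)}$ recorded after~(\ref{embedding}). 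Your approach is more elementary in that it avoids the auxiliary $\tilde f$ and the ``polynomial in $H_2$ must vanish'' step; the paper's approach is tidier in that it uses only one Paley--Wiener inversion and appeals directly to the definition $\mathcal{T}_2^{(n)}=W^{-n}(L^2(t^n))$ rather than to its pointwise characterisation. One small point worth making explicit in your write-up: Lemma~\ref{clave}(ii) is stated for $f\in\mathcal{T}_2^{(n)}$, but the identity you need is purely the algebraic inversion of Leibniz's rule and applies once $f$ is $n$-times differentiable on $(0,\infty)$, which you correctly deduce from the smoothness of $t^nf$ as an $n$-fold primitive.
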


\begin{proof} Given $f\in\TniiRma$, then  $t^k f^{(k)}\in L^2(\RR^+)$ and therefore
$z^k(\mathcal{L}f)^{(k)}\in {H}_{2}$ for $0\le k\le n$ by Lemma \ref{derilaplace}. Thus $\mathcal{L}f\in \HiinCma$.

Conversely, take  $F\in\HiinCma\subset H_2$. By the classical Paley-Wiener theorem,  there exists
$f\in L^2(\RR^+)$  such that $F=\LL f$.
We want to prove that, in fact, $f\in\TniiRma$. To this aim denote by $G$  the analytic function defined by
$$
G(z):= (-1)^n\sum_{j=0}^{n}{n\choose j}{n!\over j!}z^jF^{(j)}(z),\qquad z\in \CC^+.
$$
Since $F\in\HiinCma$, we have that $G$ belongs to $H_{2}$  and so there also exists $g\in L^2(\RR^+)$ such that
$G=\LL g$.
As in Definition \ref{defi},  the function $\tilde f$ belongs to $\TniiRma$, where
$$
\tilde f(s):={1\over (n-1)!}\int_s^\infty {(s-t)^{n-1}\over t^n}g(t)dt, \qquad s>0.
$$
Note that $ g= t^n (\tilde{f})^{(n)}$ by Theorem \ref{integralderi}. Now we apply Lemma \ref{derilaplace} to get, for
$z\in \CC^+$,
$$
G(z)= \LL g(z)=\LL( t^n (\tilde{f})^{(n)})(z)=  \sum_{j=0}^{n}(-1)^n{n\choose j}{n!\over j!}z^j\LL(\tilde{f})^{(j)}(z).
$$
Since $F=\LL f$, it follows from Lemma  \ref{derilaplace} that
$$
\left(z^n \LL(\tilde{f}-f)\right)^{(n)}(z)=(-1)^n\sum_{j=0}^{n}{n\choose j}{n!\over j!}z^j\LL(\tilde{f}-f)^{(j)}(z)=0,
\quad z\in \CC^+,
$$
so that $z^n \LL(\tilde{f}-f)(z)= P_n(z)$ where $P_n$ is a polynomial of degree less than or equal to $n-1$. Since
$P_n(z)z^{-n}$ lies in $H_2$ it must be identically null on $\C^+$; that is, $\LL(\tilde{f}-f)=0$. Hence
$f=\tilde{f}\in\TniiRma$ by the injectivity of $\LL$. All in all, we have shown that $\LL(\TniiRma)=\HiinCma$.

In order to finish the proof of the theorem we must only show the equality between  the inner products in
$\TniiRma$  and $\HiinCma$, respectively. This is as follows.

Take $f,g\in\TniiRma$ with $F=\LL f, G=\LL g$ in $\HiinCma$.
By Theorem \ref{teoLii} and the isometry provided by the classical Paley-Wiener Theorem,
\begin{eqnarray*}
\left\langle f,g \right\rangle_{(n)}& :=& \left\langle t^{n}f^{(n)},t^{n}g^{(n)} \right\rangle
 = \left\langle (t^{n}f)^{(n)},(t^{n}g)^{(n)} \right\rangle \\
 &=& \left( \LL((t^{n}f)^{(n)})\mid \LL((t^{n}g)^{(n)}) \right)\\
&=& \left( (-1)^nz^{n}(\LL f)^{(n)}(z)\mid (-1)^nz^{n}(\LL g)^{(n)}(z) \right)\\
&=& \left( F\mid G \right)_{(n)},
\end{eqnarray*}
where we have used (\ref{laplacederi}) in the last but one equality.
\end{proof}

\begin{corollary}
For every $m\ge n\ge0$, the linear inclusion
$$
H^{(m)}_2\hookrightarrow H^{(n)}_2
$$
is continuous with dense range.
\end{corollary}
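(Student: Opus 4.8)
The plan is to pull everything back to the Sobolev--Lebesgue spaces $\mathcal{T}^{(\cdot)}_2$ through the Laplace transform, exploiting that $\LL$ is a single operator on $L^2(\R^+)$ whose restriction to each $\mathcal{T}^{(k)}_2$ is, by Theorem \ref{teoroyo}, an isometric isomorphism onto $H^{(k)}_2$. Concretely, for $m\ge n\ge 0$ the set-theoretic inclusion $H^{(m)}_2\subset H^{(n)}_2$ is genuine, since $z^kF^{(k)}\in H_2$ for all $0\le k\le m$ forces the same for $0\le k\le n$; and on the preimage side \eqref{embedding} gives the inclusion $\mathcal{T}^{(m)}_2\subset\mathcal{T}^{(n)}_2$. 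Because $\LL f$ does not depend on which of these spaces we regard $f$ in, the square relating the two inclusions through the two Laplace isometries commutes, so $H^{(m)}_2=\LL(\mathcal{T}^{(m)}_2)$ and both continuity and density may be checked on the preimage side.

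For continuity I would invoke the quantitative embedding estimate established just before \eqref{embedding}: applying it with the roles of $n$ and $k$ played by $m$ and $n$ yields, for every $f\in\mathcal{T}^{(m)}_2$,
\[
\Vert f\Vert_{2,(n)}^2\le \frac{\pi}{\Gamma(m-n+\tfrac12)^2}\,\Vert f\Vert_{2,(m)}^2 .
\]
Writing $F=\LL f$ and using that $\LL$ is isometric from $\mathcal{T}^{(k)}_2$ onto $H^{(k)}_2$ for $k=n$ and $k=m$ (Theorem \ref{teoroyo}), this transfers verbatim to $\Vert F\Vert_{2,(n)}\le \tfrac{\sqrt{\pi}}{\Gamma(m-n+1/2)}\Vert F\Vert_{2,(m)}$, which is exactly the asserted continuity of the inclusion.

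For the dense range, the key observation is that the span $\mathcal{E}$ of the exponentials $\{e_\lambda:\Re\lambda>0\}$ sits inside \emph{every} space $\mathcal{T}^{(k)}_2$: indeed Lemma \ref{dense}(i) places each $e_\lambda$ in $\mathcal{T}^{(k)}_2$ for arbitrary nonnegative integer $k$, so in particular $\mathcal{E}\subset\mathcal{T}^{(m)}_2$. On the other hand Lemma \ref{dense}(ii), applied with index $n$, says $\mathcal{E}$ is dense in $\mathcal{T}^{(n)}_2$. Hence the intermediate subspace $\mathcal{T}^{(m)}_2$, containing the dense set $\mathcal{E}$, is itself dense in $\mathcal{T}^{(n)}_2$ for the norm $\Vert\cdot\Vert_{2,(n)}$; applying the isometric isomorphism $\LL\colon\mathcal{T}^{(n)}_2\to H^{(n)}_2$ shows that $H^{(m)}_2$ is dense in $H^{(n)}_2$. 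The computation is short once these ingredients are in place; the only point requiring care is the commuting-square bookkeeping of the first paragraph, namely that the abstract inclusions coming from \eqref{embedding} are realized, after applying $\LL$, as the concrete function-space inclusion $H^{(m)}_2\subset H^{(n)}_2$, so that density and continuity genuinely transfer.
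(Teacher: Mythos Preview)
Your argument is correct and matches the paper's proof essentially line for line: continuity comes from the embedding \eqref{embedding} transferred via the Laplace isometry of Theorem \ref{teoroyo}, and density from the fact that $\mathcal E$ lies in $\mathcal T^{(m)}_2$ while being dense in $\mathcal T^{(n)}_2$. The only cosmetic difference is that the paper phrases the density step on the image side, introducing $\mathcal K:=\LL(\mathcal E)$ as the span of the kernel functions $K_\lambda$ and noting $\mathcal K\subset H^{(m)}_2$, whereas you stay on the preimage side and push forward at the end; your commuting-square remark makes clear these are the same.
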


\begin{proof} The continuity of the inclusion mapping $H^{(m)}_2\hookrightarrow H^{(n)}_2$ follows from Theorem \ref{teoroyo}
and (\ref{embedding}). As for the density, let $\mathcal K$ be the linear space spanned by all functions of type $K_\lambda$,
$\Re\lambda>0$. Then $\mathcal L(\mathcal E)=\mathcal K$ where $\mathcal E$ is the space defined in Lemma \ref{dense} (ii). Clearly,
$\mathcal K\hookrightarrow H^{(m)}_2\hookrightarrow H^{(n)}_2$ for $m\ge n\ge0$, whence the desired density follows from
Lemma \ref{dense} (ii) and Theorem \ref{teoroyo} again.
\end{proof}

\section{The Hardy-Sobolev spaces on the disc}\label{HardySobolev2}
Recall that the Hilbertian Hardy space $H_2(\D)$ over the unit disc $\mathbb{D}$ is endowed with the norm
$$
\| F\| _{2,\D}:=\sup_{0<r<1} \left(\frac{1}{2\pi }\int_{\theta\in [0, 2\pi]} |F(re^{i\theta})|^2\, d\theta\right)^{1\over 2}.
$$

It is well known that one can use  Cayley's transform to navigate between spaces $H_2(\D)$ and $H_2$, and subspaces of them: The Cayley transform is defined by
$\gamma (\lambda)=(1+\lambda)/(1-\lambda)$, for all $\lambda\in \D$.
Then,
$F\in H_2$ if and only if $F_{\D}\in (1-\lambda)H_2(\D)$, where
\begin{equation}\label{FunctionG02}
F_\D(\lambda):=F\left(\frac{1+\lambda}{1-\lambda}\right), \quad \lambda\in \D;
\end{equation}
see \cite[Theorem p. 129]{Hoffman}. See also \cite[Theorem 13.17]{JM} for the relations between $H^p(\D)$  and $H^p(\C^+)$ for $0< p\le \infty$.
Functions in ${H^{(1)}_2}$ can be transferred to a subspace contained in
$H_2(\D)$ as follows.

\begin{proposition}\label{PropositionUnitDisc}
Let $F$ be an analytic function on $\C^+$ and $F_\D$ the function on the unit disc $\mathbb{D}$ given by {\rm (\ref{FunctionG02})}.
Then $F\in {H^{(1)}_2} $ if and only if
\begin{equation}\label{FunctionG01}
F_\D\in (1-\lambda)H_2(\D)\quad \mbox{and}\quad F_\D^\prime \in (1+\lambda)^{-1}H_2(\D).
\end{equation}
If so, then the following equality holds:
\begin{equation}\label{NormEquality}
\sqrt2\| F\|_{2,(1)} =\| (1+\lambda)F^\prime_\D (\lambda)\|_{2,\D}.
\end{equation}
\end{proposition}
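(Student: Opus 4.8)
The plan is to transport everything to the disc through the Cayley transform $\gamma(\lambda)=(1+\lambda)/(1-\lambda)$ and to reduce both the membership and the norm statements to the criterion, already recorded before the proposition, that $F\in H_2$ exactly when $F_\D\in(1-\lambda)H_2(\D)$. First I would differentiate the relation $F_\D(\lambda)=F(\gamma(\lambda))$ from (\ref{FunctionG02}). Since $\gamma'(\lambda)=2/(1-\lambda)^2$, the chain rule gives $F_\D'(\lambda)=\frac{2}{(1-\lambda)^2}F'(\gamma(\lambda))$, hence $F'(\gamma(\lambda))=\frac{(1-\lambda)^2}{2}F_\D'(\lambda)$. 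Writing $G:=zF'$, the function whose $H_2$-membership is exactly the extra requirement distinguishing $H^{(1)}_2$ from $H_2$, this produces the clean identity $G_\D(\lambda)=\gamma(\lambda)F'(\gamma(\lambda))=\frac{(1+\lambda)(1-\lambda)}{2}F_\D'(\lambda)=\frac{1-\lambda^2}{2}F_\D'(\lambda)$ on $\D$.

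With this identity in hand the equivalence is immediate. The condition $F\in H_2$ is, by the Cayley criterion, exactly $F_\D\in(1-\lambda)H_2(\D)$, the first half of (\ref{FunctionG01}). Applying the same criterion to $G$ gives $G\in H_2$ iff $G_\D/(1-\lambda)\in H_2(\D)$; but $G_\D/(1-\lambda)=\frac{1+\lambda}{2}F_\D'$, which lies in $H_2(\D)$ iff $(1+\lambda)F_\D'\in H_2(\D)$, i.e.\ iff $F_\D'\in(1+\lambda)^{-1}H_2(\D)$, the second half of (\ref{FunctionG01}). Since $F\in H^{(1)}_2$ means precisely $F\in H_2$ together with $zF'\in H_2$, combining the two equivalences proves the ``if and only if''.

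For the norm equality (\ref{NormEquality}) I would assume $F\in H^{(1)}_2$, so that $G=zF'\in H_2$, and compute $\|F\|_{2,(1)}^2=\|G\|_2^2$ from the boundary integral in (\ref{Norm01}). On the unit circle $\lambda=e^{i\theta}$ the Cayley transform sends $e^{i\theta}$ to the imaginary point $i\cot(\theta/2)$, so the substitution $t=\cot(\theta/2)$ converts $\frac{1}{2\pi}\int_{-\infty}^{\infty}|G^*(it)|^2\,dt$ into an integral over $[0,2\pi]$ with Jacobian $dt=-\frac12\csc^2(\theta/2)\,d\theta$. Using $G_\D=\frac{1-\lambda^2}{2}F_\D'$, the elementary boundary identities $|1-e^{2i\theta}|=2|\sin\theta|$ and $|1+e^{i\theta}|^2=4\cos^2(\theta/2)$, together with $\sin^2\theta=4\sin^2(\theta/2)\cos^2(\theta/2)$, collapse every trigonometric factor and turn the integrand into $\frac12|(1+e^{i\theta})F_\D'(e^{i\theta})|^2$. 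This yields $\|G\|_2^2=\frac12\|(1+\lambda)F_\D'\|_{2,\D}^2$, which is exactly (\ref{NormEquality}).

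The main obstacle is not the algebra but making the boundary computation rigorous: one must verify that $G$ and $F_\D'$ possess the relevant nontangential and radial boundary limits and that the interior identity $G_\D=\frac{1-\lambda^2}{2}F_\D'$ passes to the boundary. Because $(1-\lambda^2)/2$ is bounded and analytic up to $\overline{\D}$, the boundary value of the product factors as the product of the boundary values, and the $H_2$-membership of $G$ (secured in the first part) legitimizes representing $\|G\|_2$ by its boundary integral. The change of variables $t=\cot(\theta/2)$ is a smooth monotone bijection of $(0,2\pi)$ onto $\R$, so no further care is required there; I expect the only genuinely delicate point to be justifying the passage to boundary values, which the boundedness of the multiplier resolves.
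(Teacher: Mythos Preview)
Your argument for the equivalence is essentially identical to the paper's: both apply the Cayley criterion $F\in H_2 \Leftrightarrow F_\D\in(1-\lambda)H_2(\D)$ once to $F$ and once to $G:=zF'$, using the chain-rule identity $G_\D=\tfrac{1-\lambda^2}{2}F_\D'$ to translate the second condition into $F_\D'\in(1+\lambda)^{-1}H_2(\D)$.

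For the norm equality (\ref{NormEquality}) the paper simply invokes the isometry relation from \cite[p.~130]{Hoffman}, whereas you carry out the boundary change of variables $t=\cot(\theta/2)$ explicitly. Your computation is correct: the Jacobian $\tfrac12\csc^2(\theta/2)$ combines with $|1-e^{2i\theta}|^2=4\sin^2\theta$ and $\sin^2\theta/\sin^2(\theta/2)=4\cos^2(\theta/2)=|1+e^{i\theta}|^2$ to give exactly $\|G\|_2^2=\tfrac12\|(1+\lambda)F_\D'\|_{2,\D}^2$. This direct route has the advantage of being self-contained and of making transparent where the factor $\sqrt2$ originates; the paper's citation is shorter but leaves that constant opaque. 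Your remarks on passing to boundary values are adequate, since $G\in H_2$ and $(1+\lambda)F_\D'\in H_2(\D)$ guarantee a.e.\ boundary limits, and the bounded analytic multiplier $(1-\lambda^2)/2$ transfers the interior identity to the circle.
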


\begin{proof}
The condition  $zF'\in H_2$ is equivalent to
\begin{equation}\label{Belongs01}
\frac{1+\lambda}{1-\lambda}F^\prime \left(\frac{1+\lambda}{1-\lambda}\right)
\in (1-\lambda)H_2(\D).
\end{equation}
Since, obviously,
$$
(F_\D)^\prime (\lambda)=F^\prime
\left(\frac{1+\lambda}{1-\lambda}\right)\frac{2}{(1-\lambda)^2},
$$
it follows that relation (\ref{Belongs01}) is equivalent to $F^\prime_\D \in (1+\lambda)^{-1}H_2(\D)$.
Thus $F\in {H^{(1)} _2}$ if and only if (\ref{FunctionG01}) holds. Also, the norm equality (\ref{NormEquality}) is valid,
by \cite[p. 130]{Hoffman}.
\end{proof}

Of course Proposition \ref{PropositionUnitDisc} can be extended for $n>1$, i.e., to identify the subspaces in
$(1-\lambda)H_2(\D)$  formed by functions $F_\D$ with $F\in H_2^{(n)}$ for $n> 1$. For example,
for $n=2$ and $F\in H_2^{(2)}$, we have that
$$
(F_\D)''(\lambda)={-4\over (1-\lambda)^3}F'\left(\frac{1+\lambda}{1-w}\right)
+{4\over (1-\lambda)^4}F''\left(\frac{1+\lambda}{1-w}\right),\quad \lambda\in \D,
$$
and we conclude that $(1+\lambda)^2(1-\lambda)F''_\D\in H_2(\D)$.

In this way one could deal with Hardy-Sobolev type spaces on the unit disc instead on the right-hand half-plane.
However we stay working in the half-plane setting, where one can make use of the Laplace transform.

Nonetheless, let us show an interesting property of $H_2^{(n)}$ by arguing on its copy in $H_2(\D)$

\begin{corollary}
If $F\in H^{(n)}_2$, then the functions $z^{k-1}F^{(k-1)}$ are extensible by continuity at all nonzero points
on the imaginary axis, the point at infinity included, for $k=1,2, \dots ,n$.
\end{corollary}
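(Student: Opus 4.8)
The plan is to transfer everything to the unit disc via the Cayley transform $\gamma(\lambda)=(1+\lambda)/(1-\lambda)$ and exploit an elementary fact: an analytic function on $\D$ whose derivative also belongs to $H_2(\D)$ extends continuously to $\overline{\D}$. Under $\gamma$ the boundary circle $\partial\D$ corresponds to $i\R\cup\{\infty\}$, with $\lambda=1\mapsto\infty$ and $\lambda=-1\mapsto 0$; thus ``all nonzero points of $i\R$, together with the point at infinity'' corresponds precisely to $\overline{\D}\setminus\{-1\}$, and the single excluded point $z=0$ corresponds to $\lambda=-1$.

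First I would reduce the statement to the case $n=1$, applied to each successive derivative. Fix $k\in\{1,\dots,n\}$ and set $G:=z^{k-1}F^{(k-1)}$. Since $k-1\le n$, the defining property of $H^{(n)}_2$ gives $G\in H_2$. A direct differentiation yields $zG'=(k-1)\,z^{k-1}F^{(k-1)}+z^{k}F^{(k)}=(k-1)G+z^{k}F^{(k)}$, and because $k\le n$ the term $z^kF^{(k)}$ also lies in $H_2$; hence $zG'\in H_2$ and therefore $G\in H^{(1)}_2$. So it suffices to prove that every $G\in H^{(1)}_2$ extends continuously to $(i\R\cup\{\infty\})\setminus\{0\}$.

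The key lemma I would isolate is the following coefficient estimate: if $\Phi\in H_2(\D)$ with $\Phi'\in H_2(\D)$, then $\Phi$ extends continuously (indeed uniformly) to $\overline{\D}$. Writing $\Phi(\lambda)=\sum_{m\ge0} a_m\lambda^m$, the hypotheses say $\|\Phi\|_{2,\D}^2=\sum_m|a_m|^2<\infty$ and $\|\Phi'\|_{2,\D}^2=\sum_m m^2|a_m|^2<\infty$; pairing $m|a_m|$ against $\sum_{m\ge1}m^{-2}<\infty$ by Cauchy--Schwarz gives $\sum_m|a_m|<\infty$, so the boundary series converges absolutely and uniformly by the Weierstrass test. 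To apply this I would NOT take $\Phi=G_\D$, which misbehaves at $\lambda=-1$, but rather multiply by $(1+\lambda)$ to clear that singularity: set $\Phi:=(1+\lambda)G_\D$. From $G\in H_2$ and Proposition~\ref{PropositionUnitDisc} one has $G_\D\in H_2(\D)$, hence $\Phi\in H_2(\D)$; and from $zG'\in H_2$ the same proposition gives $(1+\lambda)(G_\D)'\in H_2(\D)$, so that $\Phi'=G_\D+(1+\lambda)(G_\D)'\in H_2(\D)$. The lemma then makes $\Phi$ continuous on $\overline{\D}$, whence $G_\D=\Phi/(1+\lambda)$ is continuous on $\overline{\D}\setminus\{-1\}$. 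Translating back through $\gamma$, the function $G=z^{k-1}F^{(k-1)}$ extends continuously to $(\overline{\C^+}\cup\{\infty\})\setminus\{0\}$, in particular to every nonzero point of $i\R$ and to $\infty$.

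The main obstacle is precisely the singular point $\lambda=-1$, i.e. $z=0$ -- which is the one point the statement excludes. Everything hinges on the observation that multiplying $G_\D$ by the factor $(1+\lambda)$ produces a function whose derivative lies in $H_2(\D)$, thereby absorbing the potential blow-up of $(G_\D)'$ near $\lambda=-1$ while leaving continuity intact at every other boundary point. The remaining ingredients -- the reduction to $H^{(1)}_2$ and the Cauchy--Schwarz coefficient bound -- are routine once this trick is in place.
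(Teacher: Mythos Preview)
Your proof is correct and follows essentially the same route as the paper: reduce to the case $G\in H^{(1)}_2$ via the identity $zG'=(k-1)z^{k-1}F^{(k-1)}+z^kF^{(k)}$, pass to the disc with the Cayley transform, set $\Phi=(1+\lambda)G_\D$, and verify that both $\Phi$ and $\Phi'$ lie in $H_2(\D)$. The only difference is the last step: the paper observes $\Phi'\in H_2(\D)\subseteq H_1(\D)$ and then invokes \cite[Theorem~3.11]{Duren} (the Hardy--Littlewood result that $f'\in H_1$ forces $f$ to extend continuously, indeed absolutely continuously, to the closed disc), whereas you use the more elementary Cauchy--Schwarz coefficient bound $\sum|a_m|\le(\sum m^2|a_m|^2)^{1/2}(\sum m^{-2})^{1/2}$ available because $\Phi'\in H_2(\D)$. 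Your version is self-contained and avoids the external citation; the paper's version would still work under the weaker hypothesis $\Phi'\in H_1(\D)$.
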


\begin{proof}
Take $F\in H^{(1)}_2$. By condition (\ref{FunctionG01}),
$F_\D\in (1-\lambda)H_2(\D)\subseteq H_2(\D)$, $(1+\lambda)F^\prime_\D \in H_2(\D)$
and clearly $[(1+\lambda)F_\D(\lambda)]^\prime=(1+\lambda)F^\prime_\D (\lambda)+F_\D(\lambda)$.
It follows that
$[(1+\lambda)F_\D(\lambda)]^\prime \in H_2(\D)\subseteq H_1(\D)$. Then, by \cite[Theorem 3.11]{Duren},
the function $(1+\lambda)F_\D(\lambda)$ is extensible by continuity on the unit circle and so, $F_\D$ extends by continuity to that circle except possibly at $-1$. Since the Cayley transform $\gamma$
maps $-1$ to $0$ and $1$ to $\infty $, the conclusion on the extensibility of $F$ by continuity follows.

Now take $F\in H^{(n)}_2$ for $n>1$. Then the identity
$$
z\frac{d}{dz}\left( z^{k-1}F^{(k-1)}(z)\right)=(k-1)z^{k-1}F^{(k-1)}(z)+z^{k}F^{(k)}(z), \quad z\in \C^+,
$$
implies that $z^{k-1}F^{(k-1)}(z)$ belongs to $H^{(1)}_2$ for $1\le k\le n$. Thus the continuity on the imaginary axis of the functions
$z^kF^{(k)}$, $k=1, \dots, n-1$, follows by that and the extensibility by continuity of $H^{(1)}_2$--functions.
\end{proof}

%%%%%%%%%%%%%%%%%%%%%%%
%%%%%%%%%%%%%%%%%%%%%%%
\section{Reproducing kernel of $H^{(n)}_2$}\label{RKHSobolev}
%%%%%%%%%%%%%%%%%%%%%%%
%%%%%%%%%%%%%%%%%%%%%%%

Given a set $\Omega$ one says that ${\mathcal H}$ is a (complex) reproducing kernel Hilbert space (RKHS, for short) on $\Omega$ if it is a Hilbert space of complex functions on $\Omega$ with inner product $\langle\cdot\mid\cdot\rangle$ such that the evaluation functional
$$
\delta_s\colon{\mathcal H}\to\C, \ f\mapsto f(s),
$$
is bounded for all $s\in \Omega$. Thus for every $s\in \Omega$ there exists a unique
$k_s\in{\mathcal H}$ such that $f(s)=\langle f\mid k_s\rangle$ for every $f\in{\mathcal H}$. Functions $k_s$ are called kernel-functions, and the span generated by of all kernel-functions is dense in $\mathcal H$. One defines
$K(s,t)=\langle k_t\mid k_s\rangle $, $s,t\in \Omega$, which is said to be the reproducing kernel of $\mathcal H$. This map $K$ is a
nonnegative, or positive semidefinite, kernel (called sometimes positive semidefinite Moore matrix) in the sense of Aronszajn--Moore, meaning that it satisfies
$$
\sum_{i,j=1}^nK(s_i,s_j)c_i\overline{c_j}\geq 0,\quad n\geq 1,\,\, c_i\in \mathbb{C},\,\, s_i\in \Omega,\,\, i,j=1,2,\dots ,n.
$$
A basic reference for nonnegative kernels is \cite{Aronszajn}.

Point evaluation functionals on $H_2$ are continuous, so that $H_2$ is a RKHS. In fact,
$F(w)=(F\mid K_w)$ with $K_w:=(z+\overline w)^{-1}$ for all $z\in\C^+$ and $w\in\C^+$, as mentioned in
Section \ref{intro}. Let $n$ be a natural number. Since $H_2^{(n)}$ is continuously contained in $H_2$ it follows that $H_2^{(n)}$ has all point evaluations continuous, so it is also a RKHS.
In this section we determine the nonnegative kernel of $H_2^{(n)}$ and provide estimates of the norm of kernel functions.

\begin{theorem}\label{RKHS}
Let $n$ be a natural number. Then the function $K_{n}$ defined on $\C^+\times\C^+$ by
$$
K_{n}(z,w)=\frac{1}{((n-1)!)^2}\int _0^1\int _0^1\frac{(1-t)^{n-1}(1-s)^{n-1}}{zt+s\overline{w}}\, ds\, dt,
\ z,w \in \C ^+,
$$
is the reproducing kernel of  $H_2^{(n)}$; that is, if  $K_{n,w}(z):=K_{n}(z,w)$ then
$$
K_{n,w}\in H_2^{(n)} \ \hbox { and } F(w)=(F\mid K_{n,w}) \ \hbox { for all } F\in H_2^{(n)}.
$$
\end{theorem}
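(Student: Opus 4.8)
The plan is to realize the stated kernel as the Laplace transform of the explicit function produced by Proposition \ref{key}, and then to verify the reproducing property by transporting everything to $\Tn2$ through the isometry of Theorem \ref{teoroyo}. Concretely, I would apply Proposition \ref{key} with $\overline{w}$ in place of $w$ to obtain $g_{\overline{w},n}\in\Tn2$, and set $K_{n,w}:=\LL(g_{\overline{w},n})$; by Theorem \ref{teoroyo} this already forces $K_{n,w}\in H_2^{(n)}$. It then remains to identify $\LL(g_{\overline{w},n})$ with the announced double integral and to check that $K_{n,w}$ reproduces point evaluation.

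First I would compute $\LL(g_{\overline{w},n})$ explicitly. Writing $g_{\overline{w},n}=W^{-n}\varphi_{\overline{w},n}$ as in the proof of Proposition \ref{key}, so that $W^{-n}\varphi_{\overline{w},n}(t)=\frac{1}{(n-1)!}\int_t^\infty(s-t)^{n-1}\varphi_{\overline{w},n}(s)\,ds$, I would insert this into $\LL(g_{\overline{w},n})(z)=\int_0^\infty e^{-zt}g_{\overline{w},n}(t)\,dt$, interchange the order of integration by Fubini--Tonelli, and use the substitution $t=s\tau$ to turn the inner integral into $s^n\int_0^1 e^{-zs\tau}(1-\tau)^{n-1}\,d\tau$. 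Since $\varphi_{\overline{w},n}(s)s^n=\int_0^1\frac{(1-x)^{n-1}}{(n-1)!}e^{-sx\overline{w}}\,dx$, the remaining $s$-integral is the elementary $\int_0^\infty e^{-s(z\tau+x\overline{w})}\,ds=(z\tau+x\overline{w})^{-1}$, and collecting the factors yields exactly
$$
K_{n,w}(z)=\frac{1}{((n-1)!)^2}\int_0^1\int_0^1\frac{(1-\tau)^{n-1}(1-x)^{n-1}}{z\tau+x\overline{w}}\,dx\,d\tau,
$$
which is the formula in the statement after renaming the variables.

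For the reproducing property I would take $F=\LL f$ with $f\in\Tn2$ and use the isometry of Theorem \ref{teoroyo} to write $(F\mid K_{n,w})_{(n)}=\langle f,g_{\overline{w},n}\rangle_{(n)}$. Using the formula $t^n g_{\overline{w},n}^{(n)}(t)=(-1)^n\int_0^1\frac{(1-x)^{n-1}}{(n-1)!}e^{-tx\overline{w}}\,dx$ from Proposition \ref{key}, I would expand this inner product, interchange integrals, and recognize the $t$-integral as $\LL(t^nf^{(n)})(xw)$, so that $\langle f,g_{\overline{w},n}\rangle_{(n)}=(-1)^n\int_0^1\frac{(1-x)^{n-1}}{(n-1)!}\LL(t^nf^{(n)})(xw)\,dx$. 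Now Lemma \ref{derilaplace} rewrites $(-1)^n\LL(t^nf^{(n)})(xw)$ as $\sum_{j=0}^n\binom{n}{j}\frac{n!}{j!}(xw)^jF^{(j)}(xw)$, and Lemma \ref{clave}(ii) identifies this sum with $\frac{d^n}{dx^n}\big[x^nF(xw)\big]$. Writing $\Psi(x)=x^nF(xw)$ and applying Taylor's formula with integral remainder on $[0,1]$ gives
$$
\langle f,g_{\overline{w},n}\rangle_{(n)}=\int_0^1\frac{(1-x)^{n-1}}{(n-1)!}\Psi^{(n)}(x)\,dx=\Psi(1)-\sum_{k=0}^{n-1}\frac{\Psi^{(k)}(0)}{k!}=F(w),
$$
since $\Psi(1)=F(w)$ and each lower-order term vanishes.

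The step I expect to be most delicate is the vanishing $\Psi^{(k)}(0)=0$ for $0\le k\le n-1$, together with the legitimacy of the integral interchanges near the boundary ray $x\to0^+$. By Leibniz each $\Psi^{(k)}$ is a sum of terms $x^{n-i}w^{k-i}F^{(k-i)}(xw)$ with $i\le k$; the boundary regularity of the products $z^mF^{(m)}$ (the continuous extensibility established in the Corollary of Section \ref{HardySobolev2}, equivalently the estimate giving $F^{(m)}(z)=O(|z|^{-m})$ near $0$) forces $x^{n-i}F^{(k-i)}(xw)=O(x^{\,n-k})\to0$ for $k\le n-1$. To make this rigorous I would integrate over $[\varepsilon,1]$ and let $\varepsilon\to0^+$, so that the boundary contributions are killed by these estimates, and I would justify the Fubini steps by the $L^2$-bounds already recorded in Proposition \ref{key}. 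Finally, should one prefer to avoid the boundary analysis for a general $F$, it suffices to verify $\langle f,g_{\overline{w},n}\rangle_{(n)}=\LL f(w)$ on the dense subspace $\mathcal E$ of Lemma \ref{dense} and extend by continuity, since both sides are continuous linear functionals of $f$ (the right-hand side because point evaluation is bounded on the RKHS $H_2^{(n)}$).
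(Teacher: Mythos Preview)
Your argument is correct. The identification $K_{n,w}=\LL(g_{\overline{w},n})$ via Fubini and the substitution $t=s\tau$ is essentially the paper's computation (the paper uses the change of variables $s=t/y$ instead, arriving at the same double integral).

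For the reproducing identity, however, you take a genuinely different route. The paper stays entirely on the $\Tn2$ side: it writes $f=(-1)^nW^{-n}f^{(n)}$ straight from Definition~\ref{defi}, inserts this into $\LL(f)(\overline w)=\int_0^\infty f(t)e^{-\overline w t}\,dt$, swaps the order of integration once, and recognizes the resulting expression as $\int_0^\infty f^{(n)}(t)\,t^{2n}\,\overline{g_{w,n}^{(n)}(t)}\,dt=\langle f,g_{w,n}\rangle_{(n)}$ directly from the formula for $t^ng_{w,n}^{(n)}$ in Proposition~\ref{key}. No appeal to Lemma~\ref{derilaplace}, no Taylor remainder, and no boundary analysis at $x\to0^+$ is needed. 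Your path through $(-1)^n\LL(t^nf^{(n)})(xw)=\dfrac{d^n}{dx^n}\bigl[x^nF(xw)\bigr]$ and the integral form of Taylor's theorem is correct and rather elegant, but it is heavier: it forces you to justify $\Psi^{(k)}(0)=0$ for $k\le n-1$, which you do via the $H_2$ pointwise bound $|z^mF^{(m)}(z)|\le C(\Re z)^{-1/2}$ (giving $x^{n-i}F^{(k-i)}(xw)=O(x^{\,n-k-1/2})\to0$), or alternatively via the density fallback through $\mathcal E$. Both work; the paper's trick simply sidesteps the issue by never passing to the holomorphic side for this step.
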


\begin{proof} Let $w \in \C^+$ and let $g_{{w},n}$ be the function discussed in Proposition \ref{key}.
Then $g_{{w},n}\in\TniiRma$ and moreover, using the change of  variables $s=ty^{-1}$ and taking the Laplace transform
we have for every $z\in\C^+$,
\begin{eqnarray*}
\LL (g_{{w},n})(z)&=& \int_{0}^{1} \int_{0}^{1}
\frac{(1-y)^{n-1}}{(n-1)!} \frac{(1-x)^{n-1}}{(n-1)!}
{\mathcal L}(e^{-(\cdot)\ w x/y})(z) dx {dy\over y}\\
&=&  \int_{0}^{1} \int_{0}^{1} \frac{(1-y)^{n-1}}{(n-1)!} \frac{(1-x)^{n-1}}{(n-1)!} \frac{1}{xw+zy} dx dy
=K_{n,\overline{w}}(z).
\end{eqnarray*}

On the other hand, using the integral formula given for $g_{w, n}^{(n)}$ in Proposition \ref{key} we have,
for any $f\in\TniiRma$,
\begin{eqnarray*}
{\mathcal L}(f)(\overline{w})&=&\int_{0}^{\infty}f(t)e^{-\overline{w} t}dt
= (-1)^{n}\int_{0}^{\infty} \int_{t}^{\infty}\frac{(t-s)^{n-1}}{(n-1)!}f^{(n)}(s) ds\ e^{-\overline{w} t}dt\cr
&=&
(-1)^{n} \int_{0}^{\infty}f^{(n)}(t)\int_{0}^{t}\frac{(t-s)^{n-1}}{(n-1)!}e^{-\overline{w} s} ds dt\cr
&=& \displaystyle\int_{0}^{\infty}f^{(n)}(t)t^{2n} \overline{\gwn^{(n)}(t)}dt =\langle f, g_{w, n}\rangle_{2,(n)}.
\end{eqnarray*}

Now take $F\in H^{(n)}_2$. By Theorem \ref{teoroyo}, there exists a unique $f\in \TniiRma$
such that ${\mathcal L}(f)=F$ with
$$
(F\mid K_{n,w} )_{(n)}=
\langle f, g_{\overline w,n} \rangle_{2,(n)}=F(w), \quad w \in \C^+,
$$
as we needed to show.
\end{proof}

\begin{remark}
\normalfont
By expanding $(1-t)^{n-1}$ and $(1-s)^{n-1}$ in the integral expression of $K_n$ one gets
\begin{eqnarray*}K_{n,w}(z)
&=&
\left(\displaystyle\sum_{k=0}^{n-1}\frac{1}{(n+k)!(n-1-k)!} \frac{wc^{k}}{z^{k+1}}\right)\log\left(\displaystyle\frac{z+w c}{w c}\right) \cr&\qquad&\qquad\qquad + \left(\displaystyle\sum_{k=0}^{n-1}\frac{1}{(n+k)!(n-1-k)!} \frac{z^{k}}{wc^{k+1}} \right)\log\left(\displaystyle\frac{z+wc}{z}\right)\cr
&\qquad&\qquad\qquad + \displaystyle\sum_{k=1}^{n-1} \sum_{j=0}^{k-1} \frac{(-1)^{k+j}}{(k-j)(n+k)!(n-1-k)!} \frac{z^{j}}{wc^{j+1}}\cr&\qquad&\qquad\qquad +
\displaystyle\sum_{k=1}^{n-1} \sum_{j=0}^{k-1} \frac{(-1)^{k+j}}{(k-j)(n+k)!(n-1-k)!} \frac{wc^{j}}{z^{j+1}}.\end{eqnarray*}

In particular, for $n=1$, $n=2$, and $n=3$ we have respectively
\begin{eqnarray*}
K_{1,w}(z)&=&{1\over z}\log\left({\overline{w}+z\over\overline{w} }\right)+{1\over \overline{w}}\log\left({\overline{w}+z\over z }\right),\cr
K_{2,w}(z)&=&-{1\over 6z}+{3z+\overline{w}\over 6z^2}\log\left({\overline{w}+z\over\overline{w} }\right)
-{1\over 6\overline{w}}+{3\overline{w}+z\over 6\overline{w}^2}\log\left({\overline{w}+z\over z }\right),\cr
K_{3,w}(z)&=&-{(9z+2\overline{w})\over 240z^2}
+{10z^2+5z\overline{w}+\overline{w}^2\over 120z^3}\log\left({\overline{w}+z\over\overline{w}  }\right)\cr
&\quad&-{(9\overline{w}+2z)\over 240\overline{w}^2}+{10\overline{w}^2+5z\overline{w}+z^2\over 120\overline{w}^3}
\log\left({\overline{w}+z\over z }\right).
\end{eqnarray*}
\end{remark}

\medskip
Next, we proceed to estimate the norm of the kernel function. We claim that
$\Vert K_{n,w}\Vert_{2,(n)}\sim \vert w\vert^{-1/2}$ for $w\in\C^+$ (up to lower and upper constants depending on $n$). To show this, we appeal to the trigonometric form of the integral definining $K_n$.

\begin{lemma}\label{integ}
For $\theta\in ({-\pi/2}, {\pi/2})$, put
$$
I(\theta):=\displaystyle{\int_{0}^{1}\frac{\cos\theta}{t^{2}+1+2t\cos(2\theta)}dt}.
$$
Then $\displaystyle{I(0)={1\over2}}$ and
$$
I(\theta)=  \frac{1}{2} \frac{\theta}{\sin\theta}, \hbox{ if } \theta\not=0,
$$
\hbox { whence } $\displaystyle{
{1\over 2}\le I(\theta)\le {\pi \over 4}}$.
\end{lemma}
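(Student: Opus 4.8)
The plan is to evaluate the integral in closed form by completing the square in the denominator and integrating to an arctangent, and then to read off the two-sided bound from the elementary monotonicity of $\theta\mapsto\theta/\sin\theta$.

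First I would observe that $I$ is an even function of $\theta$, since both $\cos\theta$ and $\cos(2\theta)$ are even; this reduces the whole computation to $\theta\in[0,\pi/2)$. The case $\theta=0$ is immediate: the denominator collapses to $(t+1)^2$, so $I(0)=\int_0^1(t+1)^{-2}\,dt=\tfrac12$.

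For $\theta\in(0,\pi/2)$ I would complete the square, writing $t^2+1+2t\cos(2\theta)=(t+\cos 2\theta)^2+\sin^2(2\theta)$, where $\sin(2\theta)>0$ because $2\theta\in(0,\pi)$. Integrating the resulting $\int_0^1 dt/\bigl((t+\cos 2\theta)^2+\sin^2 2\theta\bigr)$ produces $\tfrac{1}{\sin 2\theta}\bigl[\arctan\!\bigl((t+\cos 2\theta)/\sin 2\theta\bigr)\bigr]_0^1$. The key simplification uses the double-angle identities $1+\cos 2\theta=2\cos^2\theta$ and $\sin 2\theta=2\sin\theta\cos\theta$, so that at $t=1$ the argument of $\arctan$ is $\cot\theta$ and at $t=0$ it is $\cot 2\theta$. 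The one subtle point, which I would verify carefully, is the branch of $\arctan$: for $\theta\in(0,\pi/2)$ both $\pi/2-\theta$ and $\pi/2-2\theta$ lie in $(-\pi/2,\pi/2)$, hence $\arctan(\cot\theta)=\pi/2-\theta$ and $\arctan(\cot 2\theta)=\pi/2-2\theta$, and the bracket equals $\theta$. Multiplying by $\cos\theta/\sin 2\theta=1/(2\sin\theta)$ gives $I(\theta)=\tfrac12\,\theta/\sin\theta$, as claimed; by evenness this formula extends to $\theta\in(-\pi/2,0)$ as well.

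Finally, for the bound I would invoke the monotonicity of $h(\theta):=\theta/\sin\theta$ on $(0,\pi/2)$. Its derivative has the sign of $\sin\theta-\theta\cos\theta$, whose value at $0$ is $0$ and whose derivative $\theta\sin\theta$ is positive, so $\sin\theta-\theta\cos\theta>0$ and $h$ increases from $\lim_{\theta\to 0}h(\theta)=1$ to $h(\pi/2)=\pi/2$. Thus $1\le h(\theta)<\pi/2$ on $[0,\pi/2)$, the value $1$ being supplied by the $\theta=0$ case, and halving together with evenness yields $\tfrac12\le I(\theta)\le\tfrac{\pi}{4}$. I do not anticipate any genuine obstacle beyond the careful bookkeeping of the arctangent branch noted above.
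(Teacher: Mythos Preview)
Your proof is correct and follows essentially the same route as the paper: completing the square to $(t+\cos 2\theta)^2+\sin^2 2\theta$, integrating to an arctangent, and using $\arctan(\cot\theta)-\arctan(\cot 2\theta)=\theta$ to arrive at $\tfrac12\,\theta/\sin\theta$. If anything, your version is slightly more thorough, since you verify the arctangent branch explicitly and prove the monotonicity of $\theta/\sin\theta$, whereas the paper simply asserts the bound $1\le\theta/\sin\theta\le\pi/2$.
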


\begin{proof}
For $\theta=0$, it is straightforward to check that $I(0)=1/2$.

For $0<\theta<{\pi \over 2}$, we have
$I(\theta)=I(-\theta)$ and
\begin{eqnarray*}
I(\theta) &= &{\cos\theta}{}\int_{0}^{1}\frac{dt}{(t+\cos(2\theta))^{2}+\sin^{2}(2\theta)}\cr
&=&\frac{1}{2|\sin\theta|}\int_{0}^{\frac{1}{|\sin(2\theta)|}}\frac{dx}{\left(x+\frac{\cos(2\theta)}{|\sin(2\theta)|}\right)^{2}+1} \cr &=&\frac{1}{2|\sin\theta|}\int_{\frac{\cos(2\theta)}{|\sin(2\theta)|}}^{\frac{\cos\theta}{|\sin\theta|}}\frac{dy}{y^{2}+1} = \frac{1}{2} \frac{\theta}{\sin\theta},
\end{eqnarray*}
where we have  changed the variables $\displaystyle{dy=dx+\cos(2\theta)\vert\sin(2\theta)\vert^{-1}}$.

Finally, since $1\le (\theta/\sin\theta)\le\pi/2$ the desired result follows.
\end{proof}

In the following, we use the relation
$$
\| K_{n,z}\|^2_{2,(n)} = \left( K_{n,z}\mid K_{n,z}  \right)_{(n)} = K_{n,z} (z), \quad n\ge1, z\in\C^+.
$$

\begin{theorem} \label{cota} Let $n\in\NN$,  $z=\vert z\vert e^{i \theta}\in\CCma$ and $\theta \in (-\pi/2, \pi/2)$.
Then
\begin{equation}
\label{inequ}
\frac{1}{\Gamma(n)\sqrt{{ (2n-1)}}}{1\over\sqrt{\vert z\vert}}
\leq \|K_{n,z}\|_{2,(n)} \leq \frac{\sqrt{\pi}}{\Gamma(n)\sqrt{{ n}}}{1\over\sqrt{\vert z\vert}}.
\end{equation}
\end{theorem}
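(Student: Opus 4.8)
The plan is to read both the $|z|^{-1/2}$ behaviour and sharp enough constants directly off the integral formula for $K_n$ in Theorem \ref{RKHS}, using the identity $\|K_{n,z}\|_{2,(n)}^2=K_{n,z}(z)$ recorded just above the statement. Writing $z=|z|e^{i\theta}$, so that $\overline z=|z|e^{-i\theta}$ and $zt+s\overline z=|z|(te^{i\theta}+se^{-i\theta})$, the factor $|z|^{-1}$ comes straight out of the integral. Since the left-hand side is real, I may replace the integrand by its real part, and a direct computation gives $\operatorname{Re}\,(te^{i\theta}+se^{-i\theta})^{-1}=(t+s)\cos\theta\,\big(t^2+s^2+2ts\cos 2\theta\big)^{-1}$, whence
$$
\|K_{n,z}\|_{2,(n)}^2=\frac{A_n(\theta)}{|z|\,((n-1)!)^2},\qquad A_n(\theta):=\int_0^1\!\!\int_0^1 (1-t)^{n-1}(1-s)^{n-1}\,\frac{(t+s)\cos\theta}{t^2+s^2+2ts\cos 2\theta}\,ds\,dt .
$$
The whole problem is thereby reduced to bounding $A_n(\theta)$ above and below by constants depending only on $n$, uniformly in $\theta$.

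The key device is the sum--difference substitution $p=t+s$, $q=t-s$ (Jacobian $\tfrac12$), under which the denominator diagonalizes as $t^2+s^2+2ts\cos 2\theta=(t+s)^2\cos^2\theta+(t-s)^2\sin^2\theta=p^2\cos^2\theta+q^2\sin^2\theta$, the numerator becomes $p\cos\theta$, and the weight becomes $\big((1-\tfrac p2)^2-\tfrac{q^2}{4}\big)^{n-1}$ over the region $p\in(0,2)$, $|q|<\min(p,2-p)$. The point is that the inner integral in $q$ is now an elementary arctangent: over a symmetric interval $[-a,a]$ with $a\le p$,
$$
\int_{-a}^{a}\frac{p\cos\theta}{p^2\cos^2\theta+q^2\sin^2\theta}\,dq=\frac{2}{\sin\theta}\arctan\!\Big(\frac{a\tan\theta}{p}\Big),
$$
which equals exactly $\tfrac{2\theta}{\sin\theta}=4I(\theta)$ when $a=p$, and is bounded above by $\tfrac{2\theta}{\sin\theta}$ in general (by monotonicity of $\arctan$, since $\min(p,2-p)\le p$). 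Lemma \ref{integ} then supplies the uniform control $1\le\theta/\sin\theta\le\pi/2$, equivalently $\tfrac12\le I(\theta)\le\tfrac\pi4$. This is precisely the mechanism that absorbs the concentration of the integrand on the diagonal $t=s$ as $\theta\to\pi/2$, which is the only place the estimate could deteriorate; I expect this diagonalization, together with the uniform-in-$\theta$ treatment of the near-singular $q$-integral, to be the one genuinely delicate point. Everything afterwards is routine Beta-function bookkeeping.

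For the lower bound I restrict $p$ to $(0,1)$, where the $q$-range is the full interval $(-p,p)$; there I bound the weight below by $\big((1-\tfrac p2)^2-\tfrac{q^2}{4}\big)^{n-1}\ge(1-p)^{n-1}$ (valid since $|q|\le p$), evaluate the inner integral exactly as $4I(\theta)$, and integrate $\int_0^1(1-p)^{n-1}\,dp=\tfrac1n$, obtaining $A_n(\theta)\ge 2I(\theta)/n\ge 1/n\ge 1/(2n-1)$. For the upper bound I bound the weight above by its value at $q=0$, namely $(1-\tfrac p2)^{2n-2}$, use the inner estimate $\le\tfrac{2\theta}{\sin\theta}=4I(\theta)$ valid for all $p\in(0,2)$, and integrate $\int_0^2(1-\tfrac p2)^{2n-2}\,dp=\tfrac{2}{2n-1}$, obtaining $A_n(\theta)\le 4I(\theta)/(2n-1)\le \pi/(2n-1)\le \pi/n$. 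Inserting these into the displayed identity, taking square roots and dividing by $\Gamma(n)\sqrt{|z|}=(n-1)!\sqrt{|z|}$ then yields exactly the two inequalities in (\ref{inequ}) (in fact with the slightly sharper constants $1/\sqrt n$ and $\sqrt{\pi}/\sqrt{2n-1}$ in place of $1/\sqrt{2n-1}$ and $\sqrt{\pi}/\sqrt n$).
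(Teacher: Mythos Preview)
Your proof is correct, and it even delivers sharper constants than the paper's. The overall architecture is the same as the paper's proof: write $\|K_{n,z}\|_{2,(n)}^2=K_{n,z}(z)$, pull out the factor $|z|^{-1}$, take the real part, and then control the remaining $\theta$--dependent double integral via the elementary bound $1\le\theta/\sin\theta\le\pi/2$ supplied by Lemma~\ref{integ}. The genuine difference lies in how the double integral is decoupled. The paper exploits the $t\leftrightarrow s$ symmetry to restrict to the triangle $\{0<x<y<1\}$ and then substitutes $x=yt$, which maps the $\theta$--dependence onto a $t$--integral that is \emph{exactly} $I(\theta)$; the weight becomes $(1+t)(1-yt)^{n-1}(1-y)^{n-1}$, and the crude bounds $(1-y)^{n-1}\le(1+t)(1-yt)^{n-1}\le2$ give the constants $1/(2n-1)$ and $\pi/n$. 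You instead rotate coordinates via $p=t+s$, $q=t-s$, diagonalising the denominator to $p^2\cos^2\theta+q^2\sin^2\theta$, so that the inner $q$--integral is an explicit arctangent equal to (or bounded by) $2\theta/\sin\theta=4I(\theta)$; your weight bounds $(1-p)^{n-1}\le\big((1-\tfrac p2)^2-\tfrac{q^2}{4}\big)^{n-1}\le(1-\tfrac p2)^{2n-2}$ then yield the tighter pair $1/n$ and $\pi/(2n-1)$. So both routes hinge on the same angular lemma, but your substitution extracts more from the weight and actually reverses the roles of $n$ and $2n-1$ in the final inequality.
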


\begin{proof}
For $n\ge1$ and $z=\vert z\vert e^{i \theta}\in\CCma$, we have
\begin{eqnarray*}
\Vert K_{n,z}\Vert_{2,(n)}^2&=&K_{n,z}(z) =\int_{0}^{1}\int_{0}^{1}\frac{(1-y)^{n-1}}
{\Gamma(n)}\frac{(1-x)^{n-1}}{\Gamma(n)}\frac{1}{x\zc+yz}dxdy\cr
&=&\frac{1}{|z|}\int_{0}^{1}\int_{0}^{1}\frac{(1-y)^{n-1}}{\Gamma(n)}\frac{(1-x)^{n-1}}{\Gamma(n)}\frac{(x+y)\cos\theta}{x^{2}+y^{2}+2xy\cos(2\theta)}dxdy,
\end{eqnarray*}
since $\Im(K_{n,z}(z))=0$ (use symmetry in the imaginary part of the integral). Thus using symmetry first and then the change of variables $x=yt$ one obtains
\begin{eqnarray*} K_{n,z}(z) &=&
\frac{2\cos\theta}{\Gamma(n)^2|z|}\int_{0}^{1}\int_{0}^{y}\frac{(1-x)^{n-1}(1-y)^{n-1}(x+y)}{x^{2}+y^{2}+2xy\cos(2\theta)}dxdy\cr
&=&\frac{2\cos\theta}{\Gamma(n)^2|z|}
\int_{0}^{1}\int_{0}^{1}\frac{(1+t)(1-yt)^{n-1}(1-y)^{n-1}}{t^{2}+1+2t\cos(2\theta)}dtdy.
\end{eqnarray*}

Given that, one has $(1+t)(1-yt)^{n-1}\le 2$ if $t,y\in (0,1),$  it follows that the  following upper estimate holds
\begin{eqnarray*}
K_{n,z}(z)
&\le& \frac{4\cos\theta}{\Gamma(n)^2|z|}\int_{0}^{1}\int_{0}^{1}\frac{(1-y)^{n-1}}{t^{2}+1+2t\cos(2\theta)}dtdy\cr&=& \frac{4}{n\Gamma(n)^2|z|}I(\theta)\le \frac{\pi}{n\Gamma(n)^2|z|}.
\end{eqnarray*}
Notice that we have used Lemma \ref{integ} in the last inequality.

For the lower norm estimate now, observe that, for $t,y\in (0,1)$ one has $(1+t)(1-yt)^{n-1}\ge (1-y)^{n-1}$ and therefore
\begin{eqnarray*}
K_{n,z}(z)
&\ge&\frac{2\cos\theta}{\Gamma(n)^2|z|}\int_{0}^{1}\int_{0}^{1}\frac{(1-y)^{2n-2}}{t^{2}+1+2t\cos(2\theta)}dtdy\cr
&=&\frac{2}{\Gamma(n)^2|z|(2n-1)}I(\theta)\ge \frac{1}{\Gamma(n)^2|z|(2n-1)},
\end{eqnarray*}
by Lemma \ref{integ} again.
\end{proof}

\begin{remark}
With regards to the relationship between norms of the kernels note that, for $n=0$, we have
$\Vert K_{0,z}\Vert_{2}=\Vert K_{z}\Vert_{2}=\displaystyle{1\over \sqrt{2\Re z}}$ and then
$$
\|K_{1,z}\|_{2,(1)}\le \sqrt{2\pi} \|K_{0,z}\|_{2}, \quad z\in\C^+.
$$

For $n\ge2$ and $z=\vert z\vert e^{i\theta}$, it follows easily from the equality

$$
K_{n,z}(z)
=\frac{2\cos\theta}{\Gamma(n)^2|z|}
\int_{0}^{1}\int_{0}^{1}\frac{(1+t)(1-yt)^{n-1}(1-y)^{n-1}}{t^{2}+1+2t\cos(2\theta)}dt\ dy
$$
that
$$
K_{n,z}(z)\le {1\over (n-1)^2}K_{n-1,z}(z), \quad z\in\C^+.
$$
\end{remark}

Concerning point estimates of functions in the Hardy-Sobolev spaces, recall that it is well known that for every
$F\in H_2$,
$$
|F(z)|^{2} \leq \frac{\|F\|^{2}_{2}}{4\pi \Re z}, \qquad z\in \C^+,
$$
where the constant $1/4 \pi$ is the best possible, see for example \cite[Lemma 3.2]{Arvan} and \cite[Chapter VI.C]{Duren}.
Similar bounds are available now for functions $F\in H_2^{(n)}$ .

\medskip
\begin{corollary} Let $n\in\NN$ and $F\in\HniiCma$. Then for each $z\in\C^+$
$$
|F(z)|^{2} \leq \frac{\pi \|F\|^{2}_{2,(n)}}{\Gamma(n)^{2}n |z|}.
$$
\end{corollary}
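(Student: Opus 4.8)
The plan is to read the inequality off directly from the reproducing kernel structure of $H_2^{(n)}$, so that all the analytic work has already been done in Theorem \ref{RKHS} and Theorem \ref{cota}. First I would invoke the reproducing property established in Theorem \ref{RKHS}: since $K_{n,z}$ is the kernel function of the RKHS $H_2^{(n)}$, every $F\in H_2^{(n)}$ satisfies
$$
F(z)=\left(F\mid K_{n,z}\right)_{(n)},\qquad z\in\C^+.
$$
Applying the Cauchy--Schwarz inequality for the inner product $\left(\cdot\mid\cdot\right)_{(n)}$ then yields
$$
|F(z)|=\left|\left(F\mid K_{n,z}\right)_{(n)}\right|\le \|F\|_{2,(n)}\,\|K_{n,z}\|_{2,(n)}.
$$

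Next I would substitute the upper norm estimate for the kernel function obtained in Theorem \ref{cota}, namely
$$
\|K_{n,z}\|_{2,(n)}\le \frac{\sqrt{\pi}}{\Gamma(n)\sqrt{n}}\,\frac{1}{\sqrt{|z|}},\qquad z\in\C^+.
$$
Squaring the Cauchy--Schwarz bound and inserting this estimate gives
$$
|F(z)|^2\le \|F\|_{2,(n)}^2\,\|K_{n,z}\|_{2,(n)}^2\le \|F\|_{2,(n)}^2\cdot\frac{\pi}{\Gamma(n)^2\,n\,|z|}=\frac{\pi\,\|F\|_{2,(n)}^2}{\Gamma(n)^2\,n\,|z|},
$$
which is exactly the asserted inequality.

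There is essentially no obstacle remaining: the content of the corollary is entirely carried by the kernel norm bound of Theorem \ref{cota}, and the present argument is a one-line application of Cauchy--Schwarz to the reproducing identity. The only point requiring (minimal) care is the bookkeeping, ensuring that the radial dependence $|z|^{-1}$ and the constant $\pi/(\Gamma(n)^2 n)$ propagate correctly through the squaring step; this matches the contrast emphasized earlier in the introduction between the boundary-distance bound $|F(z)|^2\le \|F\|_2^2/(4\pi\Re z)$ for the classical space $H_2$ and the radial bound obtained here for $H_2^{(n)}$.
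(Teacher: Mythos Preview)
Your proof is correct and follows essentially the same approach as the paper: invoke the reproducing identity $F(z)=(F\mid K_{n,z})_{(n)}$, apply Cauchy--Schwarz, and insert the upper bound for $\|K_{n,z}\|_{2,(n)}$ from Theorem~\ref{cota}. The paper's proof is identical in substance, compressed into a single chain of inequalities.
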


\begin{proof} Let $F\in\HniiCma$ and $z\in\CCma$. By {the} Cauchy-Schwarz inequality and Theorem \ref{cota}, we have
$$
|F(z)|^{2} = |\left( F\mid K_{n,z} \right)_{(n)}|^{2} \leq \|F\|^{2}_{2,(n)} \|K_{n,z}\|^{2}_{2,(n)}
\leq \frac{\pi \|F\|^{2}_{2,(n)}}{\Gamma(n)^{2}n |z|},
$$
and the proof is over.
\end{proof}

%%%%%%%%%%%%%%%%%%
\section{Analytic selfmaps of $\C^+$}\label{selfmaps}
%%%%%%%%%%%%%%%%%%%
%%%%%%%%%%%%%%%%%%%

Here we point out some properties of analytic selfmaps $\varphi: \C^+\to \C^+$, in preparation for discussing
composition operators on $H_2^{(n)}$ in Section \ref{compositionOp}.
So in particular, in the rest of the paper we deal with selfmaps $\varphi$ which fix the point at infinity and have a finite angular derivative at $\infty $ in the sense of Carath\' eodory; that is,
$\varphi(\infty)=\infty$ and $\varphi'(\infty)<\infty$ where
$$
\varphi(\infty)=\lim_{\vert z\vert\to +\infty}\varphi(z), \qquad
\varphi'(\infty)=\lim_{z\to \infty}{z\over\varphi(z)},
$$
the second limit above being non-tangential (see details about angular derivatives and nontangential limits in \cite[Chapter 4]{Shapiro}). In \cite[Proposition 2.2]{Elliot-Jury}, a version for $\C^+$ of the Julia-Carath\'eodory  theorem is established. That result states that
the following three conditions are equivalent:
\begin{itemize}
\item[(a)] $\varphi(\infty)=\infty$ and $\varphi'(\infty)<\infty$;
\item[(b)]  $\displaystyle{\sup_{z\in \C^+}{\Re z\over \Re(\varphi(z))}}<\infty$;
\item[(c)]  $\displaystyle{\limsup_{z\to \infty}{\Re z\over \Re(\varphi(z))}}<\infty$.
\end{itemize}
Further, if (a), (b) or (c) hold then
\begin{equation}\label{finiteangular}
\varphi'(\infty)=\displaystyle{\sup_{z\in \C^+}{\Re z\over \Re(\varphi(z))}}=
\displaystyle{\limsup_{z\to \infty}{\Re z\over \Re(\varphi(z))}},
\end{equation}

We now show a simple condition for the existence and finiteness of $\varphi^\prime(\infty)$. The proof is inspired in the proof of Julia's inequality. We include it to avoid the lack of completeness.

\begin{proposition} \label{bound} Let $\varphi$ be an analytic selfmap of $\C^+$ such that
\begin{equation}\label{cota2}
\sup_{z\in \C^+} \frac{\vert z\vert }{\vert\varphi (z)\vert}<\infty.
 \end{equation}
Then the map $\varphi$ satisfies (a), (b), (c) prior to (\ref{cociente}). Moreover
$$
\varphi'(\infty)=\sup_{z\in \C^+} \frac{\Re z}{\Re \varphi (z) } \le \sup_{z\in \C^+} \frac{\left|z\right|}{\left|\varphi (z)\right|}.
$$
\end{proposition}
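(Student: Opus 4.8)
The plan is to show that condition (\ref{cota2}) forces $\varphi'(\infty)$ to exist and be finite by bounding it directly, using the elementary inequality $\Re w \le |w|$ together with a Julia-type argument at infinity. The strategy follows the classical proof of Julia's inequality, transported to the half-plane via a disc model. First I would pass to the unit disc through the Cayley transform, so that the point at infinity of $\C^+$ corresponds to the boundary point $1\in\partial\D$ and the selfmap $\varphi$ of $\C^+$ becomes a selfmap $\psi$ of $\D$ fixing (in the angular-limit sense) the point $1$. The hypothesis $\sup_{z}|z|/|\varphi(z)|<\infty$ translates into a quantitative control on how $\psi$ approaches $1$, which is exactly the kind of data that Julia's lemma converts into a horocycle-preservation statement.

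The key computation is to estimate the quantity $\Re z / \Re\varphi(z)$. The crucial elementary observation is the pointwise bound
\begin{equation*}
\frac{\Re z}{\Re\varphi(z)} \le \frac{|z|}{\Re\varphi(z)},
\end{equation*}
but this is the wrong direction, so instead I would exploit the Schwarz–Pick / horocycle inequality to produce a genuine upper bound. Concretely, Julia's inequality in the half-plane form gives, for any analytic selfmap $\varphi$ satisfying the relevant boundary-fixing hypothesis, a constant $c$ with
\begin{equation*}
\frac{|\varphi(z)-\varphi(z_0)|^2}{\Re\varphi(z)} \le c\,\frac{|z-z_0|^2}{\Re z}
\end{equation*}
along appropriate sequences; taking $z_0\to\infty$ and combining with (\ref{cota2}) lets one identify the optimal $c$ with $\sup_z |z|/|\varphi(z)|$. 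Rearranging yields $\Re z/\Re\varphi(z) \le \sup_z |z|/|\varphi(z)|$ for every $z$, and then taking the supremum over $z\in\C^+$ and invoking the equivalence (a)$\Leftrightarrow$(b)$\Leftrightarrow$(c) from \cite{Elliot-Jury} together with the identity (\ref{finiteangular}) shows that $\varphi$ satisfies (a), (b), (c), and that $\varphi'(\infty)=\sup_z \Re z/\Re\varphi(z)$ is bounded above by $\sup_z |z|/|\varphi(z)|$.

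I expect the main obstacle to be organizing the limiting argument cleanly, since $\infty$ is only a boundary point: one must justify that the angular-derivative limit defining $\varphi'(\infty)$ can legitimately be extracted from the global supremum, and that the horocycle inequality holds in the required non-tangential sense rather than merely radially. The remaining steps—the Cayley transfer and the inequality $\Re w\le |w|$—are routine. Since the statement explicitly cites that the proof is ``inspired in the proof of Julia's inequality,'' I would keep the self-map inequality derivation self-contained, deriving the horocycle estimate from the Schwarz lemma applied to $\psi$ on $\D$ rather than quoting it, and only at the very end appeal to (\ref{finiteangular}) to convert the supremum into the angular derivative $\varphi'(\infty)$.
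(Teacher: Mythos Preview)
Your overall framework---Cayley transform to the disc, then a Julia-type argument at the boundary fixed point---matches the paper. But the specific execution you outline is both more laborious than what the paper does and has a real gap in the horocycle step.

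The displayed inequality
\[
\frac{|\varphi(z)-\varphi(z_0)|^2}{\Re\varphi(z)} \le c\,\frac{|z-z_0|^2}{\Re z}
\]
followed by ``taking $z_0\to\infty$'' is not well posed: both sides blow up, and there is no clear mechanism by which the constant $c$ gets identified with $M=\sup_z |z|/|\varphi(z)|$ before you already know the angular derivative exists. Julia's lemma does produce horocycle preservation from a sequential hypothesis, but you have not said which sequential hypothesis you verify from (\ref{cota2}), nor how the resulting horocycle constant equals $M$. As written, the argument is circular or at best incomplete.

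The paper sidesteps all of this. After conjugating to $\phi=\gamma^{-1}\circ\varphi\circ\gamma$ on $\D$, it writes the \emph{algebraic identity}
\[
\frac{z}{\varphi(z)}=\frac{1-\phi(\lambda)}{1-\lambda}\cdot\frac{1+\lambda}{1+\phi(\lambda)},
\]
from which $|1-\phi(\lambda)|/|1-\lambda|\le M\,|1+\phi(\lambda)|/|1+\lambda|$ and hence $\limsup_{\lambda\to1}|1-\phi(\lambda)|/|1-\lambda|\le M$. A cited Julia--Carath\'eodory result then gives $\phi'(1)<\infty$, so $\varphi'(\infty)<\infty$ and (a),(b),(c) hold. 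The final inequality is then a one-liner: the nontangential limit $\varphi'(\infty)=\lim_{z\to\infty}|z|/|\varphi(z)|$ is automatically $\le M$, and (\ref{finiteangular}) converts this into $\sup_z \Re z/\Re\varphi(z)\le M$. No pointwise horocycle bound is ever derived directly; the heavy lifting is outsourced to the Julia--Carath\'eodory equivalence already quoted before the proposition.
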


\begin{proof}
Let $\phi$ denote the conformal conjugate $\phi =\gamma ^{-1}\circ \varphi \circ \gamma$ of
$\varphi$ via the Cayley transform $\gamma$ introduced in Section 3. Then, one can easily check that
\begin{equation}\label{DiscHalfPlaneID}
\frac{z}{\varphi(z)}=\frac{1-\phi(\lambda)}{1-\lambda}\frac{1+\lambda}{1+\phi (\lambda)},
\end{equation}
for $\lambda\in \mathbb{D}$ and $z=\gamma (\lambda)\in\C^+$. Set $M=\sup \{ |z|/|\varphi (z)|: z\in \C^+\}$.
One has $\varphi(\infty)=\infty$ since $M<\infty$, and so $\lim _{\lambda\to 1}\phi (\lambda)=1$.
By (\ref{DiscHalfPlaneID}), one can write
$$
\left| \frac{1-\phi (\lambda)}{1-\lambda}\right|\leq M \left| \frac{1+\phi (\lambda)}{1+\lambda}\right|,
\ \mbox{whence}\ \limsup_{\lambda\to 1}\left| \frac{1-\phi (\lambda)}{1-\lambda}\right|\leq M<\infty .
$$

By \cite[Remark 2]{MatacheCAOT}, the consequence of the above condition is the fact that  $\phi' (1)<\infty $, hence
$\varphi ^\prime (\infty )<\infty $ and so (a), (b), (c) hold.

It is clear that the nontangential limit
$\displaystyle\varphi^\prime(\infty)
=\lim_{z\to\infty}\frac{\vert z\vert}{\left|\varphi (z)\right|}$
is less than or equal to $\displaystyle\sup_{z\in \C^+} \frac{\left|z\right|}{\left|\varphi (z)\right|}$, whence
$$
\sup_{z\in \C^+} \frac{\Re z}{\Re \varphi (z) } \le \sup_{z\in \C^+} \frac{\left|z\right|}{\left|\varphi (z)\right|},
$$
and the proof is over.
\end{proof}

\begin{example}\label{cociente}
\begin{itemize}
\item[(a)] Let us consider the rational map
$$
r(z)=\frac{a_{l}z^{l}+\cdots+a_{1}z+a_{0}}{b_{m}z^{m}+\cdots+b_{1}z+b_{0}}
$$
with $a_{l},b_{m}\neq 0$. In \cite[Proposition 16]{Elliot} it is proved that, if $r$ is such that $r(\infty)=\infty$ and $r(\CCma)\subseteq\CCma$ then necessarily
\begin{itemize}
\item[(i)] $l=m+1$,
\item[(ii)] $a_{l}/b_{m}\in\RR$, and in particular, $a_{l}/b_{m}>0$,
\item[(iii)] $\Im(a_{0}/b_{0})\geq 0$.
\end{itemize}
In this case $r'(\infty)=b_ma_{m+1}^{-1}$.

In the particular case when $r$  is a M\"obius selfmap of $\C^+$,
such that $r(\infty)=\infty$ and $r'(\infty)<\infty$, the map  $r$ has the form
$$
r (z)=\alpha z+\beta \qquad \alpha >0\, ,\, \Re \beta \geq 0.
$$
\item[(b)] The same can be said on a slightly  larger class  of functions which are quotients  of linear combinations of fractional powers of $z$ and map the set $\C^+$ into itself.
Such a class of functions is denoted by $QLP(\C^+)$ and consists of maps
$$
\rho(z)=\frac{a_{1}z^{\alpha_{1}}+a_{2}z^{\alpha_{2}}
+\cdots+a_{m}z^{\alpha_{m}}}{b_{1}z^{\beta_{1}}+b_{2}z^{\beta_{2}}
+\cdots+b_{p}z^{\beta_{p}}}, \quad z\in \C^+.
$$
where $\alpha_{i},\beta_{j} > 0$ are not necessarily integers. Without loss of generality, one can  assume that properties $\alpha_{i}>\alpha_{i+1}$ and $\beta_{j}>\beta_{j+1}$ hold, see\cite[Section 5]{Elliot}.  If $\rho \in QLP(\C^+)$ and $\alpha_{1}=\beta_{1}+1$ then $\rho(\infty)=\infty$ and $\rho'(\infty)<\infty$. Moreover
$ \rho'(\infty)=\displaystyle{b_1/a_{1}}$.

In particular, the map
$$
\rho(z) = az+b\sqrt{z}+c, \qquad z\in \C^+,
$$
with $a,b>0$ and $\Re c \geq 0$ has been considered in \cite[Example 2.9]{MatacheHalfPlane}.
\item[(c)] As a last example, let $\varphi$ be given by $\varphi(z)=az+b\log(1+z)$ with $a, b>0$. It is clear that $\varphi(\C^+)\subset \C^+$,  $\varphi(\infty)=\infty$ and $\varphi'(\infty)=a^{-1}$.
\end{itemize}
\end{example}

\begin{remark}\label{noequalcond} Note that the quantities involved in Proposition \ref{bound} are not necessarily equal, and that the finiteness of
$\sup_{z\in \C^+} \vert z\vert \vert\varphi (z)\vert^{-1}$ is not necessary for the existence and finiteness of
$\varphi^\prime(\infty)$ . For example, for $\varphi$ given by $\varphi(z)=z+i$, $z\in \C^+$, one has
$$
\sup_{z\in \C^+} \frac{\Re z}{\Re \varphi (z) }=1, \quad \hbox{and}\quad  \sup_{z\in \C^+} \frac{\left|z\right|}{\left|\varphi (z)\right|}=\infty.
$$
In general, a M\"obius map $\varphi(z)=az+b$ satisfies condition (\ref{cota2})
if and only if $a>0$ and $\Re b>0$.
\end{remark}

We now introduce a stronger condition than (\ref{cota2}) which will be considered in the next section.

\begin{definition} Take $\varphi$ an analytic self map of $\C^+$. We say that $\varphi$ verifies the {\sl $n$-times boundedness condition} ($n$-BC) when

\begin{equation}\label{ConditionExtra}
\sup_{z\in \C^+}\left|  z^k\frac{\varphi ^{(k)}(z)}{\varphi (z)}\right|  <\infty,\quad k=1,2,3, \dots, n,
\end{equation}
for $n\ge 1$.
\end{definition}

\begin{example}\label{exams}
\begin{itemize}
\item[(i)] Recall the rational map $r$ given in Example \ref{cociente} (a)
such that $0\not\in r(i\R)$. Then the map $r$ satisfies the $n$-BC for all $n\ge 1$.
\item[(ii)] A map $\rho \in QLP(\C^+)$ satisfying the conditions in Example \ref{cociente} (b) and such that
$0\not\in\rho(i\R)$ must also satisfy the $n$-BC for all $n\ge1$.
\item[(iii)] Let $\varphi$ be a self-map on $\C^+$ satisfying conditions (\ref{cota2}) and such that
$\Vert \varphi'\Vert_\infty<\infty$. Then property $1$-BC holds for $\varphi$. In particular the map
$\varphi(z)=az+b\log(1+z)$ with $a, b>0$ has the $1$-BC property.

Moreover, for $n\ge 2$,
$$
\varphi^{(n)}(z)= {(-1)^{n-1}b(n-1)!\over (1+z)^{n}}, \quad z\in \C^+,
$$
and we conclude that the map $\varphi$ satisfies the $n$-BC property.
\end{itemize}
\end{example}

%%%%%%%%%%%%%%%%%%%%%%%%%%%%%%%%%%%%%%%%%%%
\section{Boundedness of composition operators on $H^{(n)}_2$}\label{compositionOp}
%%%%%%%%%%%%%%%%%%%%%%%%%%%%%%%%%%%%%%%%%%%
%%%%%%%%%%%%%%%%%%%%%%%%%%%%%%%%%%%%%%%%%%%

An operator $C_{\psi, \varphi }$ of type
$C_{\psi, \varphi }f:=\psi f\circ \varphi$
acting on some function space is called a weighted composition operator. Suppose that such an operator
$C_{\psi, \varphi }$ is bounded on some RKHS $\mathcal H$, with kernel $\mathcal K$,
formed by scalar valued functions on some set $\Omega$.
Then the following extended version of the Caughran--Schwarz equation holds (see \cite[Theorem 5]{MatacheCAOT}):
\begin{equation}\label{CaughranSchwarzExtended}
C^*_{\psi, \varphi }\mathcal K_x=\overline{\psi (x)}\mathcal K_{\varphi (x)},\quad x\in \Omega,
\end{equation}
from which one obtains the bound
\begin{equation}\label{NormEstimateCaughran}
\sup_{x\in \Omega} \vert\overline\psi(x)\vert{\Vert \mathcal K_{\varphi (x)}\Vert \over \| \mathcal K_x\| }
\leq \| C_{\psi,\varphi} \|.
\end{equation}

Here we are interested in operators
$C_{\varphi }:=C_{1, \varphi }$ acting on $H^{(n)}_2$,
with $\varphi$ an analytic selfmap of $\C^+$. Let us recall that, as
an immediate consequence of Littlewood's subordination principle (see for example \cite[p.16]{Shapiro}), {\it every} selfmap $\varphi$ on the disc $\D$ induces a bounded composition operator $C_\varphi$ on the  space
$H_2(\D)$, with norm estimate
$\Vert C_\varphi\Vert \le \sqrt{1+\vert \varphi(0)\vert}\left(\sqrt{1-\vert \varphi(0)\vert}\right)^{-1}$ (\cite{CM}).
Unlike the disc case, {\it not all} analytic selfmaps $\varphi $ of $\C^+$ induce bounded composition operators on
$H_2$, the necessary and sufficient condition for boundedness being (see \cite{Elliot-Jury, Singh})
\begin{equation}\label{BoundRealPart}
\varphi ^\prime (\infty )=
\sup_{z\in \C^+} \frac{\Re z}{\Re \varphi (z) }<\infty.
\end{equation}

The need of condition (\ref{BoundRealPart}) for the continuity of $C_\varphi$ is a consequence of applying estimate
(\ref{NormEstimateCaughran}) (with $\psi\equiv 1$)
to the kernel $K(z,w)=(z+\overline w)^{-1}$ in $H_2(\D)$ (as it was noticed in
\cite[Theorem 2.4]{Singh}).
The proof of the sufficiency was obtained initially in \cite[Corollary 4]{MatacheCAOT}, and, by significantly simpler methods in \cite[Theorem 3.1]{Elliot-Jury}.
Actually the norm of $C_\varphi$ is
$\| C_\varphi \| =r(C_\varphi )=\| C_\varphi \| _e=\sqrt{\varphi ^\prime (\infty )}$,
where $r$ denotes the spectral radius and $\| \quad \| _e$  the essential norm (see \cite[Theorem 3.1, 3.4]{Elliot-Jury}).
In particular,  by (\ref{BoundRealPart}),
it is readily seen which M\"obius selfmaps of $\C^+$ induce bounded composition operators on $H_2$, namely only the maps of type
\begin{equation}\label{Mobius}
\varphi (z)=\alpha z+\beta \qquad z\in \C^+,
\end{equation}
for $\alpha >0$ and $\Re \beta \geq 0$ have that property.

In connection with the above results, the main idea underlying papers
\cite{Elliot-Jury} and \cite{Jury} is to approach the adjoint of a (weighted or unweighted) composition operator by using nonnegative kernels. Next, we state in full generality and prove the boundedness principle which is behind some of the results
in \cite{Jury} and \cite{Elliot-Jury}. For this, recall that given two nonnegative kernels $K_1$ and $K_2$ on the same set
$\Omega$, we write $K_1\leq K_2$ if $K_2-K_1$ is a  {nonnegative kernel} on $\Omega$. Also, for every nonnegative kernel $K$ on $\Omega$ and every selfmap
$\varphi $ on $\Omega$, we denote by $K\circ \varphi $ the mapping
$$
K\circ \varphi(x,y):=K(\varphi (x),\varphi (y)),\qquad x,y\in \Omega.
$$
It is not difficult to check that $K\circ \varphi$ is a nonnegative kernel.

\begin{theorem}\label{TheoremJury'sPrinciple}
Let $\mathcal H$ be a  RKHS  consisting of scalar valued functions on some set
$\Omega$, having nonnegative kernel $\mathcal K$,
and the property that the set of all kernel functions of $\mathcal H$ is a linearly independent set.
Then a weighted composition operator $C_{\psi, \varphi }$ is a bounded operator on
$\mathcal H$ if and only if there is some
$M\geq 0$ such that
\begin{equation}\label{ConditionJury}
M^2{\mathcal K}(x,y) \geq
\overline{\psi (x)}{\psi (y)} {\mathcal K}\circ \varphi (x,y),
\quad x,y\in \Omega.
\end{equation}
Furthermore, the following equality holds:
\begin{equation}\label{Minimum}
\min \{ M\geq 0:  (\ref{ConditionJury})\, \, \mbox{\rm holds}\} =\|C_{\psi, \varphi }\| .
\end{equation}
If  $M^2=\sup \{ |\psi (x)|^2\mathcal K\circ \varphi (x,x)/\mathcal K(x,x): x\in S\}$ is finite and satisfies {\rm (\ref{ConditionJury})}, then  $C_{\psi, \varphi }$ is bounded and     $\|  C_{\psi, \varphi } \| =M$.

Clearly,  $C_{1,\varphi }=C_\varphi $ and so, condition {\rm (\ref{ConditionJury})} looks as follows in this particular case:
\begin{equation}\label{ConditionJuryBis}
\mathcal K\circ \varphi \leq M^2\mathcal K.
\end{equation}
\end{theorem}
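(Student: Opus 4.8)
The plan is to move back and forth between boundedness of $C_{\psi,\varphi}$ and the kernel inequality (\ref{ConditionJury}) by testing the (formal) adjoint on kernel functions, using the extended Caughran--Schwarz identity (\ref{CaughranSchwarzExtended}) in one direction and reconstructing the adjoint by hand in the other.

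First I would establish the \emph{only if} direction. Assume $C_{\psi,\varphi}$ is bounded and put $L:=\|C_{\psi,\varphi}\|$. By (\ref{CaughranSchwarzExtended}) we have $C^*_{\psi,\varphi}\mathcal K_x=\overline{\psi(x)}\,\mathcal K_{\varphi(x)}$ for every $x\in\Omega$. Fix finitely many points $x_1,\dots,x_N\in\Omega$ and scalars $c_1,\dots,c_N$, and apply the bound $\|C^*_{\psi,\varphi}u\|\le L\|u\|$ to $u=\sum_i c_i\mathcal K_{x_i}$. Expanding both sides as double sums over $\langle\mathcal K_{x_i}\mid\mathcal K_{x_j}\rangle=\mathcal K(x_j,x_i)$ and $\langle\mathcal K_{\varphi(x_i)}\mid\mathcal K_{\varphi(x_j)}\rangle=(\mathcal K\circ\varphi)(x_j,x_i)$, the inequality $L^2\|u\|^2-\|C^*_{\psi,\varphi}u\|^2\ge0$ becomes exactly the statement that the quadratic form of $L^2\mathcal K-\overline{\psi}\,\psi\,(\mathcal K\circ\varphi)$ is nonnegative over every finite configuration. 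By the Aronszajn--Moore characterization of nonnegative kernels this is precisely (\ref{ConditionJury}) with $M=L$, so the minimal admissible $M$ is at most $\|C_{\psi,\varphi}\|$.

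For the converse I would run this computation backwards in order to \emph{construct} the adjoint. Assume (\ref{ConditionJury}) holds for some $M$. Using the hypothesis that the kernel functions are linearly independent, define a linear map $T$ on their (dense) span by $T\mathcal K_x:=\overline{\psi(x)}\,\mathcal K_{\varphi(x)}$; linear independence is exactly what guarantees $T$ is well defined. The same double-sum expansion shows that (\ref{ConditionJury}) is equivalent to $\|Tu\|\le M\|u\|$ for all $u$ in the span, so $T$ extends to a bounded operator on $\mathcal H$ with $\|T\|\le M$. It then remains to identify $T^*$: for $f\in\mathcal H$ and any $x\in\Omega$, the reproducing property gives $(T^*f)(x)=\langle T^*f\mid\mathcal K_x\rangle=\langle f\mid T\mathcal K_x\rangle=\psi(x)\langle f\mid\mathcal K_{\varphi(x)}\rangle=\psi(x)f(\varphi(x))=(C_{\psi,\varphi}f)(x)$. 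Hence $C_{\psi,\varphi}=T^*$ is bounded with $\|C_{\psi,\varphi}\|=\|T\|\le M$. Combining the two directions yields (\ref{Minimum}). For the final assertion I would note that specializing (\ref{ConditionJury}) to $x=y$ forces $M^2\ge|\psi(x)|^2(\mathcal K\circ\varphi)(x,x)/\mathcal K(x,x)$ for every $x$ with $\mathcal K(x,x)>0$; thus the displayed diagonal supremum is a lower bound for every admissible $M$, hence for the minimum, which by (\ref{Minimum}) equals $\|C_{\psi,\varphi}\|$. If that supremum value itself satisfies (\ref{ConditionJury}), it is therefore the minimum and equals $\|C_{\psi,\varphi}\|$.

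The step I expect to be the main obstacle is the bookkeeping in the two double-sum expansions: one must track the Hermitian symmetry and the conjugations in $\overline{\psi(x)}\,\psi(y)$ so that the operator inequality $\|Tu\|\le M\|u\|$ lines up \emph{exactly} with positive semidefiniteness of $M^2\mathcal K-\overline{\psi}\,\psi\,(\mathcal K\circ\varphi)$ in the Aronszajn--Moore sense (an index relabeling $i\leftrightarrow j$ together with $d_i=\overline{c_i}$ reconciles the two). Conceptually, the essential point is the role of the linear independence of the kernel functions, without which the operator $T$ that produces the adjoint need not be well defined.
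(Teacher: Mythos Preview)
Your proposal is correct and follows essentially the same approach as the paper: define the formal adjoint $T$ on the span of kernel functions via $T\mathcal K_x=\overline{\psi(x)}\mathcal K_{\varphi(x)}$ (well defined by linear independence), use the kernel inequality to bound $T$ and then identify $T^*=C_{\psi,\varphi}$, and for the converse test $C^*_{\psi,\varphi}$ on finite combinations of kernel functions via the Caughran--Schwarz identity. The only cosmetic differences are the order in which you treat the two directions and that you compute $(T^*f)(x)$ for general $f$ whereas the paper pairs against kernel functions on both sides; your handling of the diagonal-supremum clause is also slightly more explicit than the paper's.
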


\begin{proof}
Let $T$ be the following transform on the space $\mathcal{S}$ spanned by the kernel functions of $\mathcal H$:
\[
T\left(\sum _{j=1}^nc_jk_{x_j}\right):=\sum _{j=1}^nc_j\overline{\psi (x_j)}k_{\varphi (x_j)},\qquad  f=\sum _{j=1}^nc_jk_{x_j}\in \mathcal{S}.
\]
The above (obviously linear) transform is  well defined, since the set of all kernel functions of $\mathcal H$ is linearly independent. It is easy to see that $T$ is bounded since, by condition (\ref{ConditionJury}) one easily gets
\[
\left\| T\left(\sum _{j=1}^nc_jk_{x_j}\right)\right\| \leq M\left\| \sum _{j=1}^nc_jk_{x_j}\right\|, \qquad  f=\sum _{j=1}^nc_jk_{x_j}\in \mathcal{S}.
\]
Thus $T$ extends to $\mathcal H$, the closure of $\mathcal{S}$, and this extension by linearity and boundedness, denoted also by $T$, satisfies
$\| Tf\| \leq M\| f\|$, for $f\in \mathcal H$.

Since $T$ is bounded, so is $T^*$. Further, it is readily seen that
$T^*=C_{\psi, \varphi}$
and so $C_{\psi, \varphi }$ is bounded. Indeed,
\[
\langle T^*k_x, k_y\rangle =\psi (y)k_{ x}(\varphi (y))=\langle C_{\psi ,\varphi }k_x, k_y\rangle, \quad x,y\in\Omega.
\]
Note also that $\| C_{\psi ,\varphi }\| \leq M$.

Conversely, assume $\| C_{\psi ,\varphi }\| < \infty $. Then writing
\[
\left\| C_{\psi, \varphi }\left(\sum _{j=1}^nc_jk_{x_j}\right)\right\| ^2\leq \| C_{ \psi, \varphi }\| ^2\left\| \sum _{j=1}^nc_jk_{x_j}\right\| ^2,\quad  f=\sum _{j=1}^nc_jk_{x_j}\in \mathcal{S},
\]
and using (\ref{CaughranSchwarzExtended}), one gets that $M=\| C_{\psi, \varphi }\| $ satisfies condition
(\ref{ConditionJury}). Hence, if that condition holds for some $M>0$, then the least value of $M>0$ for which it holds is $M=\| C_{\psi, \varphi }\| $.

Finally, combining (\ref{CaughranSchwarzExtended}) with
$\sup \{ \| C^*_{\psi, \varphi }k_x\| /\| k_x\|  :x\in S\} \leq \| C_{\psi, \varphi }\|$
one gets that, if  $M^2
=\sup \{ |\psi (x)|^2\mathcal K\circ \varphi (x,x)/\mathcal K(x,x): x\in S\}$ is finite and satisfies {\rm (\ref{ConditionJury})} then $T_{\psi, \varphi }$ is bounded and $\|  C_{\psi, \varphi } \| =M$.
\end{proof}

Straightforward application of relation (\ref{ConditionJuryBis}) in the above theorem gives us the following characterization, in the case of the Hardy-Sobolev space $H_2^{(n)}$. We use the kernel of $H_2^{(n)}$ given in Theorem \ref{RKHS}.

\begin{corollary}\label{character(n)} Let $\varphi$ be an analytic selfmap of $\C^+.$ The composition operator $C_\varphi$ is {a} bounded operator on $H_2^{(n)}$ for $n\ge 1$ if and only if there exists $M >0$ such that the kernel $L_n$ given, for $(z,w)\in\C^+\times\C^+$,  by
\begin{eqnarray*}
&\quad&L_n(z, w):=\cr
&\ &\int _0^1\int _0^1(1-t)^{n-1}(1-s)^{n-1}
\left(\frac{M^2}{zt+s\overline{w}}-\frac{1}{\varphi(z)t+s\overline{\varphi(w)}}\right)ds\  dt,
\end{eqnarray*}
is nonnegative.
\end{corollary}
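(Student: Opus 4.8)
The plan is to obtain the statement as a direct specialization of Theorem \ref{TheoremJury'sPrinciple} to the unweighted operator $C_\varphi = C_{1,\varphi}$ on the RKHS $\mathcal{H} = H_2^{(n)}$, whose reproducing kernel $K_n$ is the one computed in Theorem \ref{RKHS}. In that case the boundedness criterion of the theorem is exactly condition (\ref{ConditionJuryBis}): $C_\varphi$ is bounded on $H_2^{(n)}$ if and only if there is $M>0$ with $K_n\circ\varphi\leq M^2 K_n$, i.e.\ with $M^2 K_n - K_n\circ\varphi$ a nonnegative kernel. Thus the whole proof reduces to substituting the integral formula for $K_n$ into this single inequality and recognizing the result as $L_n$ up to a positive constant.

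Before invoking the theorem, however, one must check its standing hypothesis, namely that the family of kernel functions $\{K_{n,w}:w\in\C^+\}$ is \emph{linearly independent}. First I would recall that the span $\mathcal{K}$ of the functions $K_\lambda(z)=(z+\overline\lambda)^{-1}$ satisfies $\mathcal{K}=\LL(\mathcal{E})\subset H_2^{(n)}$, by Lemma \ref{dense} and Theorem \ref{teoroyo}. Given distinct points $w_1,\dots,w_k\in\C^+$, suppose $\sum_j c_j K_{n,w_j}=0$. Pairing with an arbitrary $F\in\mathcal{K}\subset H_2^{(n)}$ and using the reproducing identity $F(w)=(F\mid K_{n,w})_{(n)}$ gives $\sum_j \overline{c_j}\,F(w_j)=0$. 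Taking $F=K_{\lambda_1},\dots,K_{\lambda_k}$ for distinct $\lambda_i$, the coefficient matrix $\big((\overline{\lambda_i}+w_j)^{-1}\big)_{i,j}$ is a Cauchy matrix, hence invertible, forcing all $c_j=0$. This legitimizes the application of Theorem \ref{TheoremJury'sPrinciple}.

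The remaining step is the computation. Using the explicit formula of Theorem \ref{RKHS} together with $K_n\circ\varphi(z,w)=K_n(\varphi(z),\varphi(w))$, I would write
\[
M^2 K_n(z,w) - K_n(\varphi(z),\varphi(w)) = \frac{1}{((n-1)!)^2}\int_0^1\int_0^1 (1-t)^{n-1}(1-s)^{n-1}\left(\frac{M^2}{zt+s\overline{w}}-\frac{1}{\varphi(z)t+s\overline{\varphi(w)}}\right)ds\,dt.
\]
The right-hand side equals $((n-1)!)^{-2}\,L_n(z,w)$, and since the scalar $((n-1)!)^{-2}$ is strictly positive, the kernel $M^2 K_n-K_n\circ\varphi$ is nonnegative if and only if $L_n$ is nonnegative. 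Combining this with condition (\ref{ConditionJuryBis}) yields precisely the claimed equivalence.

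I expect no serious obstacle, since the corollary is flagged as a straightforward application of Theorem \ref{TheoremJury'sPrinciple}. The only point genuinely needing care is the verification of the linear-independence hypothesis, which the Cauchy-matrix argument above handles cleanly; the rest is substitution of the integral kernel into the abstract inequality and extraction of the positive constant $((n-1)!)^{-2}$.
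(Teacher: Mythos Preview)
Your proof is correct and follows the same route as the paper, which simply records the corollary as a ``straightforward application of relation (\ref{ConditionJuryBis})'' to the kernel $K_n$ of Theorem \ref{RKHS}. The one thing you add is an explicit verification of the linear-independence hypothesis on the kernel functions $K_{n,w}$, which the paper passes over in silence; your Cauchy-matrix argument via the functions $K_\lambda\in H_2^{(n)}$ is clean and fills that small gap.
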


\medskip
\begin{remark}
\normalfont
Theorem \ref{TheoremJury'sPrinciple} extends  \cite[Theorem 3.1]{Elliot-Jury}  concerning composition operators on $H_2$, where the main idea of the proof is to show that the function $L_0$ on $\C^+\times \C^+$ defined by
\[
L_0(z, w):={\varphi ^\prime (\infty )\over z+\overline{w}}-{1\over\varphi(z)+\overline{\varphi(w)}}, \quad z, w\in \C^+,
\]
is a nonnegative kernel of $H_2$. To do this, note that  $\varphi (z)-(1/\varphi ^\prime (\infty ))z$ is an analytic selfmap
of $\C^+$  if (\ref{BoundRealPart}) holds, and then the kernel
$$
\frac{\varphi (z)-(1/\varphi ^\prime (\infty ))z+\overline{(\varphi (w)-(1/\varphi ^\prime (\infty ))w}}{z+\overline{w}},\qquad z,w\in \C ^+,
$$
is a nonnegative kernel on $\C ^+$ by the Nevanlina--Pick interpolation theorem (\cite[Theorem 2.2]{Garnett}).
Since the product of two nonnegative kernels on the same set is also a nonnegative kernel on that set, it follows that
$$
\frac{1}{\varphi (z)+\overline{\varphi (w)}}\left(\frac{\varphi (z)+\overline{\varphi (w)}}{z+\overline{w}}-\frac{1}{\varphi ^\prime (\infty )}\right)=\left(\frac{1}{z+\overline{w}}-\frac{1}{\varphi ^\prime (\infty )}\frac{1}{\varphi (z)+\overline{\varphi (w)}}\right)
$$
is a nonegative kernel on $\C ^+$, and the proof of  \cite[Theorem 3.1]{Elliot-Jury}  is over.

With regards to Corollary \ref{character(n)}, since the kernel $L_n$ is an integral version of kernel $L_0$, one may expect that some similar ideas could work --under condition (\ref{cota2}) at least, on account of Proposition \ref{PropositionNormFinite} below--
for suitable $M>0$.
However the authors have not been able to prove or find  an integral version of the Nevanlina-Pick interpolation theorem to conclude the nonnegativity of this kernel.

In fact, the characterization of the boundeness of composition operators $C_\varphi$ on the Hardy-Sobolev space $H^{(n)}_2$
does not seem to be simple. In the sequel, we present some partial results in this direction.
\end{remark}

\medskip
Estimate (\ref{NormEstimateCaughran}) together with
(\ref{inequ}) and Proposition \ref{bound} give us the following result.

\begin{proposition}\label{PropositionNormFinite}
If $C_\varphi $ is a bounded composition operator on $H^{(n)}_2$ for some $n\ge 1$,
then the map $\varphi$ satisfies condition (\ref{cota2}).
Hence,  $\varphi (\infty )=\infty$ and $\varphi' (\infty)<\infty$ and  $C_\varphi$ is a bounded operator on $H_2$.
\end{proposition}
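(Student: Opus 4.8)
The plan is to feed the reproducing kernel of $H_2^{(n)}$ into the Caughran--Schwarz norm estimate and then exploit the sharp two-sided bounds for its norm proved in Theorem \ref{cota}. Fix $n\ge 1$ and assume $C_\varphi=C_{1,\varphi}$ is bounded on $H_2^{(n)}$. Since $H_2^{(n)}$ is a RKHS with kernel $K_n$ (Theorem \ref{RKHS}), estimate (\ref{NormEstimateCaughran}) with $\psi\equiv 1$ reads
$$
\sup_{w\in\C^+}\frac{\|K_{n,\varphi(w)}\|_{2,(n)}}{\|K_{n,w}\|_{2,(n)}}\le\|C_\varphi\|<\infty .
$$
This is the only place where the boundedness hypothesis enters.

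The key step is to bound the quotient from below in terms of the radial ratio $|w|/|\varphi(w)|$. I would apply the \emph{lower} bound in (\ref{inequ}) to the numerator at the point $\varphi(w)\in\C^+$, and the \emph{upper} bound in (\ref{inequ}) to the denominator at $w$. Because the factor $\Gamma(n)$ is common to both bounds, it cancels and one is left with
$$
\frac{\|K_{n,\varphi(w)}\|_{2,(n)}}{\|K_{n,w}\|_{2,(n)}}\ge\sqrt{\frac{n}{\pi(2n-1)}}\,\sqrt{\frac{|w|}{|\varphi(w)|}},\qquad w\in\C^+ .
$$
Combining this with the displayed supremum bound yields
$$
\sup_{w\in\C^+}\frac{|w|}{|\varphi(w)|}\le\frac{\pi(2n-1)}{n}\,\|C_\varphi\|^2<\infty ,
$$
which is precisely condition (\ref{cota2}).

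Once (\ref{cota2}) is in hand, the remaining assertions follow by quoting earlier results. Proposition \ref{bound} shows that (\ref{cota2}) forces $\varphi(\infty)=\infty$ and $\varphi'(\infty)<\infty$, with $\varphi'(\infty)=\sup_{z\in\C^+}\Re z/\Re\varphi(z)\le\sup_{z\in\C^+}|z|/|\varphi(z)|$; in particular condition (\ref{BoundRealPart}) holds, so by the known characterization (\cite{Elliot-Jury,Singh}) the operator $C_\varphi$ is bounded on $H_2$.

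I do not expect a serious obstacle: the argument is essentially bookkeeping of constants, the genuine analytic content being front-loaded into Theorem \ref{cota} and Proposition \ref{bound}. The one point requiring care is to pair the estimates in the correct direction---lower bound in the numerator, upper bound in the denominator---so that the $\Gamma(n)$ factors cancel and the radial quotient $|w|/|\varphi(w)|$ emerges cleanly; pairing them the other way would give only a trivial upper estimate and no information about (\ref{cota2}).
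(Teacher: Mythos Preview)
Your proof is correct and follows exactly the route the paper indicates: the paper's own proof is the single sentence ``Estimate (\ref{NormEstimateCaughran}) together with (\ref{inequ}) and Proposition \ref{bound} give us the following result,'' and you have faithfully unpacked precisely those three ingredients, with the constants handled correctly.
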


\begin{remark}\label{Remark} The above proposition tells us that the semigroup of analytic selfmaps of $\C ^+$  inducing  bounded composition operators  on $H^{(n)}_2$ is a (proper, as we will see in Corollary \ref{CorollaryMoebius})
sub--semigroup of the semigroup of analytic selfmaps of $\C ^+$  inducing bounded composition operators  on ${H}_2$.
\end{remark}

Now, we give some sufficient conditions to conclude that the composition operator $C_\varphi$ on ${H^{(n)} _2}$ is bounded on  ${H^{(n)} _2}$. The following theorem provides  a point of view  different from that contained by Theorem \ref{TheoremJury'sPrinciple}.

\begin{theorem}\label{PropositionBoundednessSufficient}
Let $\varphi $ be an analytic selfmap of $\C ^+ $ satisfying condition {\rm (\ref{BoundRealPart})} and the $n$-BC property {\rm (\ref{ConditionExtra})}; that is,
$$
\sup_{z\in \C^+} \frac{\Re z}{\Re \varphi (z) }<\infty \quad \hbox{ and }\
\sup_{z\in \C^+}\left|  z^k\frac{\varphi ^{(k)}(z)}{\varphi (z)}\right|  <\infty,\ k=1,2,3, \dots, n.
$$
Then the map $\varphi $ induces a bounded composition operator $C_\varphi$ on ${H^{(n)} _2}$.
\end{theorem}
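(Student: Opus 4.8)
The plan is to work directly in the half-plane description of $H_2^{(n)}$ as the space of analytic $F$ on $\C^+$ with $z^kF^{(k)}\in H_2$ for $0\le k\le n$, normed by $\|F\|_{2,(n)}=\|z^nF^{(n)}\|_2$, and to control every derivative of $F\circ\varphi$ by the chain rule. First I would record two standing facts. Since $\varphi$ satisfies (\ref{BoundRealPart}), the results quoted at the start of Section \ref{compositionOp} give that $C_\varphi$ is bounded on $H_2=H^{(0)}_2$, with $\|C_\varphi\|\le\sqrt{\varphi'(\infty)}$. And by the continuous inclusions (\ref{embedding}) (equivalently, Theorem \ref{teoroyo}) one has $\|z^mF^{(m)}\|_2=\|F\|_{2,(m)}\le C_m\|F\|_{2,(n)}$ for every $0\le m\le n$. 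These two facts will convert bounds on the individual pieces of $F\circ\varphi$ into a bound in the $H^{(n)}_2$-norm.

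The heart of the argument is the Fa\`a di Bruno formula combined with a homogeneity bookkeeping. For $1\le k\le n$,
$$
(F\circ\varphi)^{(k)}(z)=\sum_{m=1}^{k}F^{(m)}(\varphi(z))\,B_{k,m}\big(\varphi'(z),\dots,\varphi^{(k-m+1)}(z)\big),
$$
where each Bell polynomial $B_{k,m}$ is a finite sum of monomials $c\prod_{j\ge1}(\varphi^{(j)}(z))^{\kappa_j}$ with $\sum_j\kappa_j=m$ and $\sum_j j\kappa_j=k$. Multiplying by $z^k$ and inserting the factor $\varphi(z)^m/\varphi(z)^m$, I would regroup each such term as
$$
\Big(\varphi(z)^mF^{(m)}(\varphi(z))\Big)\cdot c\prod_{j\ge1}\Big(\frac{z^{j}\varphi^{(j)}(z)}{\varphi(z)}\Big)^{\kappa_j}.
$$
The key point is that the two constraints $\sum_j j\kappa_j=k$ and $\sum_j\kappa_j=m$ make $z^k=\prod_j z^{j\kappa_j}$ and $\varphi(z)^m=\prod_j\varphi(z)^{\kappa_j}$ absorb \emph{exactly} into the product. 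Now the $n$-BC hypothesis (\ref{ConditionExtra}) says that each factor $z^{j}\varphi^{(j)}/\varphi$, $1\le j\le k\le n$, lies in $H^\infty(\C^+)$; hence the whole finite product is a bounded analytic function on $\C^+$. The remaining factor is $(G_m\circ\varphi)(z)$ with $G_m(w):=w^mF^{(m)}(w)$, and $G_m\in H_2$ because $F\in H^{(m)}_2$, so $G_m\circ\varphi\in H_2$ by the boundedness of $C_\varphi$ on $H_2$.

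Since multiplication by an $H^\infty(\C^+)$-function maps $H_2$ into $H_2$ with $\|gh\|_2\le\|g\|_\infty\|h\|_2$, I conclude that $z^k(F\circ\varphi)^{(k)}$ is a finite sum of $H_2$-functions, hence lies in $H_2$, for every $k=0,\dots,n$ (the case $k=0$ being just $C_\varphi$ on $H_2$). This already gives $C_\varphi F=F\circ\varphi\in H^{(n)}_2$. Taking $k=n$ and assembling the three ingredients — the uniform $H^\infty$-bounds coming from $n$-BC, the estimate $\|G_m\circ\varphi\|_2\le\sqrt{\varphi'(\infty)}\,\|z^mF^{(m)}\|_2$, and the inclusions $\|z^mF^{(m)}\|_2\le C_m\|F\|_{2,(n)}$ — yields $\|F\circ\varphi\|_{2,(n)}=\|z^n(F\circ\varphi)^{(n)}\|_2\le C\|F\|_{2,(n)}$ with $C$ independent of $F$, so $C_\varphi$ is bounded on $H^{(n)}_2$.

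The only genuinely delicate point is the combinatorial regrouping in the second paragraph: one must check that the homogeneity degrees produced by Fa\`a di Bruno match precisely, so that the weight $z^k\prod_j(\varphi^{(j)})^{\kappa_j}/\varphi^m$ collapses into a product of $n$-BC quantities. Everything else is an assembly of standard facts (boundedness on $H_2$ under (\ref{BoundRealPart}), the Hardy--Sobolev inclusions, and $H^\infty\cdot H_2\subseteq H_2$). I would also note briefly that the pointwise estimate (\ref{pointestim}) and the membership of each assembled piece in $H_2$ make the passage from the pointwise chain-rule identity to the $H_2$-statement routine.
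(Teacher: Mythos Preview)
Your proof is correct and follows essentially the same route as the paper: Fa\`a di Bruno expansion of $(F\circ\varphi)^{(n)}$, the homogeneity bookkeeping $\sum_j j\kappa_j=k$, $\sum_j\kappa_j=m$ to factor each term as $(\varphi^m F^{(m)}\circ\varphi)\cdot\prod_j(z^j\varphi^{(j)}/\varphi)^{\kappa_j}$, then the $n$-BC hypothesis to bound the product in $H^\infty$ and the boundedness of $C_\varphi$ on $H_2$ (from (\ref{BoundRealPart})) to handle the remaining factor. The only minor difference is that the paper stops at showing $C_\varphi H^{(n)}_2\subseteq H^{(n)}_2$ and invokes the Closed Graph Theorem, whereas you push the same estimates through the inclusions (\ref{embedding}) to obtain an explicit operator-norm bound; this is a harmless refinement of the same argument.
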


\begin{proof}
Since ${H^{(n)} _2} $ is a RKHS, $C_\varphi $ is a bounded composition operator on ${H^{(n)} _2} $  if and only if $C_\varphi {H^{(n)} _2} \subseteq {H^{(n)} _2} $ which follows by a straightforward application of the Closed Graph Principle. Thus, if $f$ is an arbitrary function in ${H^{(n)} _2} $, showing that $f\circ \varphi \in {H^{(n)} _2} $ will end the current proof. We will prove the aforementioned fact in the following. First note that the $n$--th derivative of the composite of two  analytic functions has the following form:
\begin{equation}\label{ChainRuleOrderN}
(f\circ \varphi )^{(n)}=\sum_{k=1}^nc_k(f^{(k)}\circ \varphi )(\varphi ^\prime )^{m_{k_1}}(\varphi ^{(2)} )^{m_{k_2}}\dots (\varphi ^{(n)} )^{m_{k_n}}
\end{equation}
where $c_1,\dots ,c_n$ and $m_{i_j}$ are nonnegative integers for all $i,j=1,2,\dots , n$ and satisfy the conditions
\begin{equation}\label{ChainRuleCondo_01}
1\cdot m_{k_1}+2\cdot m_{k_2}+\dots +n\cdot m_{k_n}=n,\quad k=1,2,\dots ,n,
\end{equation}
and
\begin{equation}\label{ChainRuleCondo_02}
 m_{k_1}+ m_{k_2}+\dots + m_{k_n}=k,\quad k=1,2,\dots ,n.
\end{equation}

The above description of the $n$--th derivative of the composition of two  analytic functions can be easily checked by induction, (alternatively one can use an exact description of the coefficients $c_k$ known as the F\` aa di Bruno formula). Our goal is to show that $\| z^n(f\circ \varphi )^{(n)}(z)\|_{2}<\infty $. To that aim, note that, by (\ref{ChainRuleOrderN})--(\ref{ChainRuleCondo_02}), one can write
\begin{eqnarray*}
&\quad&\| z^n(f\circ \varphi )^{(n)}(z)\|_2\cr&\quad& \leq
\sum_{k=1}^nc_k\| z^n(f^{(k)}\circ \varphi )(z)(\varphi ^\prime (z))^{m_{k_1}}(\varphi ^{(2)} (z))^{m_{k_2}}
\dots (\varphi ^{(n)} (z))^{m_{k_n}}\| _2\cr
&\quad&=
\sum_{k=1}^nc_k\| (f\circ \varphi )^{(k)}(z)\left(\frac{z\varphi ^\prime (z)}{\varphi (z)}\right)^{m_{k_1}}\dots
\left(\frac{z^n\varphi ^{(n)} (z)}{\varphi (z)}\right)^{m_{k_n}}(\varphi (z))^k\|_2\cr
&\quad&\leq
\sum_{k=1}^n c_k\prod _{j=1}^n\left\|\frac{z^j\varphi ^{(j)} (z)}{\varphi (z)}\right\| _\infty ^{m_{k_j}}
\| (\varphi (z))^k(f\circ \varphi )^{(k)}(z)\|_2<\infty.
\end{eqnarray*}
The quantity above is finite by conditions {\rm (\ref{BoundRealPart})} and (\ref{ConditionExtra}).
Indeed, given that $z^kf^{(k)}(z)\in H_2$ for all $k=1,2 ,\dots ,n$ and $C_\varphi $ is a bounded operator on $H_2$,
it follows that
\[
\| C_\varphi (z^kf^{(k)}(z))\| _2=\| (\varphi (z))^k(f\circ \varphi )^{(k)}(z)\|_2<\infty, \qquad k=1,2 ,\dots ,n.
\]
The proof is over.
\end{proof}

An immediate consequence of the above theorem is the following.

\begin{corollary}\label{CorollaryMoebius}
The M\"obius maps $\mu$ inducing bounded composition operators on $H^{(n)}_2$ for $n=1,2,3, \dots$ are exactly those of the form
$$
\mu(z)=az+b, \quad z\in\C^+,
$$
with $a\in\R$ and $b\in \C$ such that $a, \Re b>0$.
\end{corollary}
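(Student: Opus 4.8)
The plan is to prove the two inclusions separately, drawing on the necessary condition of Proposition \ref{PropositionNormFinite} and the sufficient condition of Theorem \ref{PropositionBoundednessSufficient}; essentially all the analytic work is already done, and the task is to assemble these ingredients and pin down the boundary case.

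For the necessity, I would begin with an arbitrary M\"obius map $\mu$ inducing a bounded operator $C_\mu$ on $H_2^{(n)}$ for some fixed $n\ge 1$. By Proposition \ref{PropositionNormFinite}, $\mu$ must then satisfy condition (\ref{cota2}); in particular $\mu(\infty)=\infty$ and $\mu'(\infty)<\infty$. A M\"obius map fixing $\infty$ has constant denominator and is therefore affine, so Example \ref{cociente}(a) applies and gives $\mu(z)=az+b$ with $a>0$ and $\Re b\ge 0$. The one remaining point is to upgrade $\Re b\ge 0$ to the strict inequality $\Re b>0$, and this is where condition (\ref{cota2}) does the decisive work: by Remark \ref{noequalcond}, an affine map $az+b$ satisfies (\ref{cota2}) precisely when $a>0$ and $\Re b>0$, so the boundary case $\Re b=0$ (exemplified by $z\mapsto z+i$, which is bounded on $H_2$ but violates (\ref{cota2})) is excluded.

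For the sufficiency, I would take $\mu(z)=az+b$ with $a>0$ and $\Re b>0$ and verify the two hypotheses of Theorem \ref{PropositionBoundednessSufficient}. Condition (\ref{BoundRealPart}) is immediate, since $\Re\mu(z)=a\Re z+\Re b>a\Re z$ forces $\Re z/\Re\mu(z)<1/a$ uniformly on $\C^+$. For the $n$-BC property (\ref{ConditionExtra}), the derivatives $\mu^{(k)}$ vanish for $k\ge 2$, so only $k=1$ needs attention: here $z\mu'(z)/\mu(z)=a\,z/(az+b)$, whose modulus equals $a\,|z|/|\mu(z)|$ and is bounded thanks to (\ref{cota2}). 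Equivalently, since $\Re\mu(it)=\Re b>0$ shows $0\notin\mu(i\R)$, the $n$-BC property follows at once from Example \ref{exams}(i). Theorem \ref{PropositionBoundednessSufficient} then yields boundedness of $C_\mu$ on $H_2^{(n)}$, and this holds for every $n$.

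Combining the two directions gives the stated characterization, and since the sufficiency holds for all $n$ while the necessity uses only some $n\ge 1$, the class of admissible M\"obius symbols is the same for every $n$. I do not anticipate a serious obstacle, as every analytical ingredient is in place; the only genuinely informative step is the necessity upgrade $\Re b\ge 0\Rightarrow\Re b>0$, which is what separates $H_2^{(n)}$ from $H_2$ (cf.\ Remark \ref{Remark}) and is forced not by finiteness of the angular derivative alone but by the stronger radial condition (\ref{cota2}). Care is needed only to treat the boundary case $\Re b=0$ correctly.
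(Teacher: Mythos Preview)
Your proposal is correct and follows essentially the same approach as the paper: necessity via Proposition \ref{PropositionNormFinite} and Remark \ref{noequalcond}, sufficiency via verifying \eqref{BoundRealPart} and the $n$-BC property and invoking Theorem \ref{PropositionBoundednessSufficient}. You spell out a few steps (the reduction to affine form, the vanishing of higher derivatives, the boundary case $\Re b=0$) more explicitly than the paper does, but the logical structure is identical.
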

\begin{proof}
Take $n\in\N$. By Proposition \ref{PropositionNormFinite} every M\"obius map $\mu$ such that $C_\mu$ is bounded on $H^{(n)}_2$ must satisfy condition \eqref{cota2}, and then is $\mu(z)=az+b$ for all $z\in\C^+$ with $a>0$, $\Re b>0$; see Remark \ref{noequalcond}.
Conversely, every M\"obius map $\mu$ of type $\mu(z) = az+ b$ on $\C^+$ with $a >0$ and $\Re b>0$ satisfies condition
\eqref{BoundRealPart} and the $n$-(BC) property \eqref{ConditionExtra}. Thus by
Theorem \eqref{PropositionBoundednessSufficient} $C_\mu$ is bounded on $H^{(n)}_2$.
\end{proof}

In the following we consider several concrete analytic selmaps $\varphi$ of $\C^+$, and discuss the boundeness of the corresponding composition operators $C_\ffi$.

\begin{example} Note that the contrapositive of Proposition \ref{bound} and
Proposition \ref{PropositionNormFinite} state that, if $\ffi$ does not induce a bounded composition operator $C_{\ffi}$ on $H_2$ (i.e. $\varphi'(\infty)=\infty$), then $C_\ffi$ cannot be bounded on $H_2^{(n)}$. Then one can use \cite[Corollary 2.2 and Example 2.4]{MatacheHalfPlane}  to deduce that neither a
bounded mapping $\ffi$ nor the map $\ffi(z)=\sqrt{z}$ can induce a bounded composition operator on
$\HniiCma$.
\end{example}

\begin{example}\label{concret}
Let $r$ be a rational map  like in Example \ref{exams} (a). Then the operator $C_{r}$ is bounded on $H_2$ (\cite[Corollary 17]{Elliot}). By Theorem \ref{PropositionBoundednessSufficient}, $C_{r}$ is also bounded on $H_2^{{(n)}}$ for  $n\ge 1$.
Similarly, the theorem can be applied to maps $\rho$ of $QLP(\C^+)$ in Example \ref{exams} (b). In particular the map
$$
\ffi(z) = az+b\sqrt{z}+c, \qquad z\in \C^+,
$$
with $a,b>0$ and $\Re c \geq 0$, induces a bounded composition operator on $H_2^{{(n)}}$ for  $n\ge 1$.
The case $n=0$ was proved in \cite[Example 2.9]{MatacheHalfPlane}.

As a last example, let  $\varphi$ be given by $\varphi(z)=az+b\log(1+z)$ with $a, b>0$ as in Example \ref{exams} (c).
Then the operator $C_{\varphi}$ is bounded on $H_2^{{(n)}}$ for  $n\ge 1$.
\end{example}

\section{Concluding remarks}
%To finish this paper, we pose some  questions.

Let $\varphi\colon\C^+\to\C^+$ be an analytic function and let $C_\varphi$ be the composition operator of symbol $\varphi$.
For $n\in\N$, we have shown that that the finiteness condition \eqref{cota2}, i. e.,
$$
\sup_{w\in \C^+} \frac{\vert w\vert }{\vert\varphi (w)\vert}<\infty,
$$
is necessary for $C_\varphi$ to be bounded on $H_2^{(n)}$.

\begin{question} Let $n$ be a natural number. Is \eqref{cota2} also sufficient in order to get $C_\varphi$ bounded
on $H_2^{(n)}$ ?
\end{question}

\begin{remark}
Since condition (\ref{cota2}) is independent of $n$, if the above question
has a positive answer then certainly a map $\varphi $ simultaneously induces bounded composition operators on all spaces $H_2^{(n)}$,  $n\in \N$. Nonetheless, since spaces $H_2^{(n)}$ involve derivatives significantly, it sounds natural --even more if the answer to Question 8.1 is negative-- to raise some questions about
$\varphi$ and its derivatives.
\end{remark}

On account of Theorem
\ref{PropositionBoundednessSufficient}, where condition (\ref{ConditionExtra}),
$$
\sup_{z\in \C^+}\left|  z^k\frac{\varphi ^{(k)}(z)}{\varphi (z)}\right|  <\infty,\quad k=1,2,3, \dots, n,
$$
is considered, we pose also the following.

\begin{question}Let $n$ be a natural number. Does an analytic selfmap of $\C^+$ induce a bounded composition operator on $H_2^{(n)}$ if and only if {\rm (\ref{ConditionExtra})} holds true ? If the answer were positive then condition
{\rm (\ref{ConditionExtra})} would imply condition (\ref{BoundRealPart}), which looks not very likely. So it is possible that we have to include condition (\ref{BoundRealPart}) in the question.
\end{question}

\begin{remark}
Finally, it seems natural giving connections between composition operators acting on spaces
$H_2^{(n)}$ and composition operators  on weighted subspaces of $H_{2}(\D)$, using
Proposition \ref{PropositionUnitDisc} for $n=1$ and what it is noted after that proposition, for $n=2$.
\end{remark}
\subsection*{\it Acknowledgments.} Authors thank  the anonymous referee and  Eva Gallardo-Guti\'errez for their advice,  comments and references which lead to the current, improved version, of this paper.


\begin{thebibliography}{99}

\bibitem{Aronszajn}  N. Aronszajn, \textit{Theory of reproducing kernels},
Trans. Amer. Math. Soc. {\bf 68} (1950), 337-404.

\bibitem{ABHN} W. Arendt,  C. J. K. Batty, M. Hieber and F. Neubrander, \textit{Vector-valued Laplace Transforms and Cauchy Problems}, Monographs in Math. \textbf{96}, Birkh\"{a}user (2001).

\bibitem{AK} W. Arendt and H. Kellerman, Integrated solutions of Volterra integro-differential equations and applications,
{\sl in} \lq\lq Proc. Conf. Integrodiff. Eq. Trento 1987," (G. Da Prato and M. Iannelli, Eds.), Pitman Res. Notes
Math., Vol. 190,  pp. 21-51 Longman, Harlow 1987.


\bibitem{Arvan} A. G. Arvanitidis, \textit{Semigroups of composition operators on Hardy spaces of the half-plane}, Acta Sci. Math. (Szeged) {\bf 81} (2015) no. 1-2, 293--308.

\bibitem{CM} C.C. Cowen and B.D. MacCluer, \textit{Composition Operators on Spaces of
Analytic Functions}, Studies in Advanced Mathematics. CRC Press, Boca Raton, FL, 1995.


\bibitem{Duren} P. Duren, \textit{Theory of $H^p$ Spaces}, Pure and Applied Mathematics, Vol. {\bf 38} Academic Press, New York--London, 1970.

\bibitem{Elliot}  S. Elliott, \textit{Adjoints of composition operators on Hardy spaces of the half-plane}, J. Funct. Anal. {\bf 256} (2009), 4162--4186.

\bibitem{Elliot-Jury} S. Elliott and M. T. Jury, \textit{Composition operators on Hardy spaces of a half-plane}, Bull. Lond. Math. Soc. {\bf 44} (2012), no. 3, 489-495.

\bibitem{Galeetal} J. E. Gal\'e, P. J. Miana and J. J. Royo, \textit{Estimates of the Laplace transform on convolution Sobolev algebras}, J. Aprox. Theory {\bf 164} (2012), no. 1, 162-178.

\bibitem{GaleSanchez} J. E. Gal\'e and L. S\'anchez-Lajusticia,
\textit{A Sobolev algebra of Volterra type}, J. Aust. Math. Soc. {\bf 92} (2012), 313-334.

\bibitem{Garnett} J. B. Garnett, \textit{Bounded Analytic Functions}, Graduate Texts in Mathematics,
vol. {\bf 236}, Springer, New York, 2007.

\bibitem{gra-ryz} I. S. Gradshteyn and I. M. Ryzhik, \textit{Table of Integrals, Series, and Products}, Academic Press, 2000.

\bibitem{Hardy} G. H. Hardy, J. E. Littlewood and G. Polya, \textit{Inequalities}, Cambridge U. P. 1964.

\bibitem{Hoffman} K. Hoffman, \textit{Banach Spaces of Analytic Functions}, Prentice-Hall Series in Modern Analysis Prentice-Hall, Inc., Englewood Cliffs, N. J., 1962.

\bibitem{Jury} M. T. Jury, \textit{Reproducing kernels, de Branges--Rovnyak spaces,
and norms of weighted composition operators}, Proc. Amer. Math. Soc. {\bf 135} (2007),  3669-3675.

\bibitem{mag-obe-son} W. Magnus, F. Oberhettinger and R. P. Soni,
\textit{Formulas and Theorems for the Special Functions of Mathematical Physics}. Springer-Verlag, 1966.

\bibitem{JM} J. Mashreghi, \textit{Representation Theorems in Hardy Spaces}, London Mathematical Society Student Texts, 74. Cambridge University Press, Cambridge, 2009.


\bibitem{MatacheHalfPlane} V. Matache, \textit{Composition operators on Hardy spaces of a half-plane},
Proc. Amer. Math. Soc. {\bf 127} (1999), no. 5, 1483-1491.



\bibitem{MatacheCAOT} V. Matache, \textit{Weighted composition operators on $H^2$ and applications}, Complex Anal. Oper. Theory {\bf 2} (2008), 169-197.




\bibitem{PR} V. I. Paulsen and M. Raghupathi, \textit{An Introduction to the Theory of Reproducing Kernel Hilbert Spaces}, Cambridge University Press, Cambridge, 2016.

\bibitem{Roy} J. J. Royo, \textit{\'Algebras y m\'odulos de convoluci\'on sobre $\R^+$ definidos mediante derivaci\'on fraccionaria} (Ph.D. Thesis, Univ. Zaragoza, 2008), Ed. Academica Espa\~nola, 2014.

\bibitem{Rud87} W. Rudin, \textit{Real and Complex Analysis}, McGraw-Hill, Singapore, 1987.

\bibitem{Shapiro} J. H. Shapiro, \textit{Composition Operators and Classical Function Theory}. Universitext:
Tracts in Mathematics. Springer-Verlag, New York, 1993.

\bibitem{Singh} R. K. Singh and S. D. Sharma, \textit{Composition operators on a functional Hilbert space},  Bull Austral Math. Soc. {\bf 20} (1979), 377-384.

\end{thebibliography}
\end{document}